\begin{document}
\newcommand{\R}{{\bf R}}
\newcommand{\K}{{\bf K}}
\newcommand{\C}{{\bf C}}
\newcommand{\sequ}[2]{(#1_{#2})_{#2\geq 1}}
\newcommand{\rsequ}[1]{(#1_n)_{n=1,2,\dots}}
\newcommand{\seq}[1]{(#1(n))_{n=0,1,\dots}}
\newcommand{\rseq}[1]{(#1_n)_{n=0,1,\dots}}
\newcommand{\css}[1]{{\color{red}#1}}
\newcommand{\cs}[1]{{\color{black}#1}}
\newcommand{\csl}[1]{{\color{black}#1}}
\newtheorem{thm}{Theorem}[section]
\newtheorem{example}{Example}
\newtheorem{defn}[thm]{Definition}
\newtheorem{cor}[thm]{Corollary}
\newcommand{\rtref}[1]{{\rm \ref{#1}}}
\newcommand{\rmref}[1]{{\rm (\ref{#1})}}
\newcommand{\rmcite}[1]{{\rm \cite{#1}}}
\newcommand{\rmciter}[2]{{\rm \cite[#1]{#2}}}
\newtheorem{lem}[thm]{Lemma}
\newtheorem{prop}[thm]{Proposition}
\newtheorem{rem}[thm]{Remark}

	\title {Random positive linear operators and their applications to nonparametric statistics\footnote{Accepted for publication in Test}}
	\author{José A. Adell,\footnote{e-mail: adell@unizar.es, Orcid ID: 0000-0001-8331-5160 } \ \ J. T. Alcalá\footnote{e-mail: jtalcala@unizar.es, Orcid ID: 0000-0001-7549-8825} \\ and  \ \  C. Sangüesa\footnote{Corresponding author. e-mail: csangues@unizar.es, Orcid ID: 0000-0002-7099-7665}
		\\{$^{\dag} $ \small Department of Statistical Methods and IUMA, University of Zaragoza, Zaragoza, 50009,  SPAIN}
		%\\{$^{\ddag} $ \small Department of Statistical Methods, University of Zaragoza, Zaragoza, 50009,  SPAIN}
	}
	
	\date{}

	\begin{titlepage}
		\setcounter{page}{1} \maketitle
		
		\bigskip \bigskip
		\begin{abstract}
			We outline a general procedure on how to apply random positive linear operators in nonparametric estimation.  As a consequence, we give explicit confidence bands and intervals for a distribution function $F$ concentrated on $[0,1]$ by means of random Bernstein polynomials, and for the derivatives of $F$ by using random Bernstein-Kantorovich type operators.  In each case, the lengths of such bands and intervals depend upon the degree of smoothness of $F$ or its corresponding derivatives, measured in terms of appropriate moduli of smoothness.  In particular, we estimate the uniform distribution function by means of a random polynomial of second order. This estimator is much simpler and performs better than the classical uniform empirical process used in the celebrated Dvoretzky-Kiefer-Wolfowitz inequality.
			
		\end{abstract}

		\bigskip \bigskip
		2000 Mathematics subject classification: Primary: 62G05, 60E05; secondary: 41A25, 41A36    
		
		\bigskip \bigskip
		{\it Key words and phrases}: random positive linear operator,
		random Bernstein-Kantorovich type operator, distribution function estimator, density estimator, confidence band, confidence interval, Dvoretzky-Kiefer-Wolfowitz inequality, moduli of smoothness
		\bigskip \bigskip

		\bigskip \noindent

	\end{titlepage}

	\maketitle
	
%	\baselineskip=1.5\baselineskip
	
	\section{Introduction}
	The Bernstein polynomials are the most paradigmatic example of positive linear operators in approximation theory.  Their approximation and preservation properties are well known (see, for instance, the monograph by Bustamante \cite{bubern}).  The approximation properties of Bernstein-Kantorovich type operators needed in this paper are collected in Section \ref{seappr}. 
	
	Since the pioneering work by Vitale \cite{viaber}, a great deal of attention has been devoted to obtain applications of Bernstein polynomials in nonparametric statistics.  Indeed, Bernstein estimators have been used to estimate distribution functions and densities (cf. Vitale \cite{viaber}, Babu et al. \cite{baappl},  Bouezmarni and Rolin \cite{borobe}, Leblanc \cite{lechun,leones}, Dutta \cite{dudist}, Ouimet \cite{ouassy}, and Wang and Lu \cite{waluap}), conditional distribution functions and densities (cf. Janssen et al. \cite{jaswbe} and Belalia et al. \cite{bebosm}), copula functions and densities (cf. Janssen et al. \cite{jaswbe,jaswan}), regression functions (cf. Tenbusch \cite{tenonp} and Slaoui \cite{slbern}), regression quantile functions (Janssen et al. \cite{jaswbe} and Khardani \cite{khbern}), and cross ratio functions (Abrams et al. \cite{abjano} and Abrams et al. \cite{abbern}), among many others. An excelent review of the literature on the subject can be found in Ouimet \cite{ouonth}.
	
	The usual approach in nonparametric estimation by means of random Bernstein polynomials can be illustrated by considering the estimation of distribution functions (see, for instance, Leblanc \cite{leones}, Ouimet \cite{ouassy}, and Wang and Lu \cite{waluap}).  Suppose that $X$ is a $[0,1]$-valued random variable having an unknown continuous distribution function $F$, and that $\mathbb{Y}_n=(Y_n(x),\ x\in [0,1] )$ is its corresponding empirical distribution function of size $n$.  The stochastic process $\mathbb{Y}_n$, the natural estimator of $F$, has discontinuous paths.  For this reason, we construct the random Bernstein polynomial of degree $m$, namely, $B_m(Y_n;x)$ acting on $Y_n$, having infinitely differentiable paths (see Section \ref{sedete} for precise definitions).  Under the assumption that $F$ is twice continuously differentiable and that $0<F(x)<1,\ x\in (0,1)$, we have
	\[n^{1/2}\left(B_m(Y_n;x)-F(x)\right) \overset{({\cal L})}{\longrightarrow} N(0,F(x)(1-F(x))), \quad x\in (0,1),\]
	whenever $m/\sqrt{n}\rightarrow \infty$, as $m,n \rightarrow \infty$, where $\overset{({\cal L})}{\longrightarrow}$ denotes convergence in law.  This allows us to provide asymptotic confidence intervals for $F(x),\ x\in (0,1)$.  The optimal degree $m=m(n)$ of the Bernstein polynomial is that minimizing the mean square error (MSE) or the mean integrated square error (MISE), respectively defined as
	\[MSE=\mathbb{E}\left(B_m(Y_n;x)-F(x)\right)^2, \quad \hbox{and} \quad MISE=\mathbb{E}\int_0^1\left(B_m(Y_n;x)-F(x)\right)^2dx.\]
	It turns out that, in both cases, the order of magnitude of $m$ is $m\sim n^{2/3}$ (a random degree is chosen in Dutta \cite{dudist}).
	
\cs{In other words, for a given confidence level $1-\alpha$, with $\alpha\in(0,1)$ and under the preceding assumptions on $F$, we asymptotically have for any $x\in(0,1)$
	\begin{equation}
		P\left(\left|B_m(Y_n;x)-F(x)\right|\geq z_{\alpha/2}\frac{ \sqrt{F(x)(1-F(x))}}{\sqrt{n}}\right)\sim \alpha, \quad  m\sim  n^{2/3},\quad n\rightarrow \infty. \label{assint}
	\end{equation}
	In this paper, we assume that $F$ is simply continuous and that $0<F(x)<1$, $x\in(0,1)$.  For any $x\in(0,1)$, we construct explicit confidence intervals of the form
	\begin{equation}
	P\left(\left|B_m(Y_n;x)-F(x)\right|\geq r_{\alpha}\frac{ \sqrt{F(x)(1-F(x))}}{\sqrt{n}}\right)\leq \alpha, \quad  n\geq n_{0}(\alpha,x), \label{conint}
\end{equation}
as well as confidence bands 
	\begin{equation}
	P\left(\left\|B_m(Y_n;x)-F(x)\right\|>\frac{ \widetilde{r}_{\alpha}}{\sqrt{n}}\right)\leq \alpha,  \label{conban}
\end{equation}
where $ r_{\alpha}$ and $\widetilde{r}_{\alpha}$ are positive constants only depending upon $\alpha$ (see Corollary \ref{coink0} and Corollary \ref{coubof}, respectively).  Then, we determine the minimum degree $m$ of the Bernstein polynomial, i.e., the simplest Bernstein estimator $B_m(Y_n;x)$ satisfying (\ref{conint}) and (\ref{conban}).  In each case, this degree $m$ essentially depends upon the rate of convergence of $\left|B_m(F;x)-F(x)\right|$ or  $\left\|B_m(F;x)-F(x)\right\|$ towards $0$.  In turn, such rates depend on the degree of smoothness of $F$, measured in terms of appropriate moduli of smoothness (see Section \ref{seappr}).  }
	
	In the particular case of the uniform distribution, the minimum degree $m$ of the Bernstein polynomial is $m=2$.  This leads us to the following striking results (see Corollary \ref{counup} and Corollary \ref{counba}, respectively)
	\[P\left(\left|B_2(Y_n;x)-x\right|\geq 4 x(1-x)\delta\right)\leq 2  exp\left(-8n\delta^2\left(1-\frac{8}{3}\delta\right)\right),\ x\in(0,1),\ 0<\delta\leq 1/8,\]
	as well as
	\[P\left(\left\|B_2(Y_n;x)-x\right\|>\delta\right)\leq 2  e^{-8n\delta^2},\quad \delta>0.\]
	In other words, $B_2(Y_n;x)$ is a random polynomial of degree $2$ only involving a random variable having the binomial distribution with parameters $n$ and $1/2$.  This estimator is much simpler and performs in a more efficient way than the classical uniform empirical process used in the Dvoretzky-Kiefer-Wolfowitz (D-K-W) inequality (cf. Dvoretzky et al. \cite{dvkias} and Massart \cite{mathet}).
	
	Formulas (\ref{conint}) and (\ref{conban}) are also extended to the derivatives $F^{(k)}$ of the distribution function $F$ (see Sections \ref{seconf}-\ref{secoid}).  In fact, we use random Bernstein polynomials to estimate $F$, and random Bernstein-Kantorovich type operators to estimate $F^{(k)},\ k\geq 1$.  This kind of results has several applications in statistical data analysis, such as local minima/maxima, optimal bandwidth in kernel density estimation and so on (see, for instance, Sasaki et al. \cite{sadire} or, in a multivariate setting, Chac\'{o}n and Duong \cite[Chap. 5 and 6]{chdumu} and the references therein).
	
	To keep the paper in a moderate size, we only consider Bernstein estimators of $F$ and its derivatives.  However, in Section \ref{sedete}, we outline a unified procedure on how to apply general random positive linear operators in nonparametric estimation.  In this regard, classical kernel estimators can be viewed as estimators built up from random Weierstrass type operators.
	
	\cs{In Section \ref{sesimu}, we illustrate some of the results in Sections \ref{seconf}-\ref{secoid} by considering simulations referring to a family of specific distribution functions with different degrees of smoothness.  The proofs of the results in Sections \ref{seconf}-\ref{sesimu} have been moved to Appendix \ref{seappe}.  In this regard, the proofs concerning confidence bands are mainly based on the D-K-W inequality, whereas those concerning conficence intervals rely mainly on the concentration inequality given in Lemma \ref{lecowt} in the Appendix.  Finally, a summary of the contributions of ths paper is provided in Section \ref{seconcl}.}
	\section{Deterministic and random positive linear operators} \label{sedete}
	Throughout this paper, the following notations and assumptions will be used. Let $\mathbb{N}$ be the set of positive integers and $\mathbb{N}_0=\mathbb{N}\cup \{0\}$. We assume that $n,m\in \mathbb{N}$, with $n,m\geq 2$.  The usual supremum norm is denoted by $\|\cdotp\|$.  Given a closed real interval $I$, we denote by $C^{k}(I)$ the set of all $k$ times continously differentiable functions defined on $I$, $k\in \mathbb{N}_0$, and set $C(I)=C^{0}(I)$.  \cs{Any stochastic process considered has right-continuous paths.  Finally, for the sake of clarity, we consider two probability spaces $(\Omega,{\cal F}, P)$ and $(\widetilde{\Omega},\widetilde{\cal F}, \widetilde{P})$, together with their product  $(\Omega\times \widetilde{\Omega},{\cal F}\otimes \widetilde{\cal F}, P\otimes \widetilde{P})$. Expectations in these three spaces are denoted by $\mathbb{E}$, $\widetilde{\mathbb{E}}$ and $\mathbb{E}\times\widetilde{\mathbb{E}}$, respectively.  Suppose that $Y$ (resp. $\widetilde{Z}$) is a real-valued random variable defined on $\Omega$ (resp. $\widetilde{\Omega}$ ). We can always assume that these two random variables are defined on  $\Omega\times \widetilde{\Omega}$ via the formulas
	\begin{equation}
		Y_1(\omega,\widetilde{\omega})=Y(\omega),\quad \widetilde{Z}_1(\omega,\widetilde{\omega})=\widetilde{Z}( \widetilde{\omega}),\quad (\omega,\widetilde{\omega})\in\Omega\times\widetilde{\Omega}.\label{varpro}
		\end{equation}
	It turns out that $Y_1$ and $\widetilde{Z}_1$ are independent.  Actually, for any Borel sets $A$ and $B$, we have $Y_1^{-1}(A)=Y^{-1}(A)\times \widetilde{\Omega}$ and $\widetilde{Z}_1^{-1}(B)=\Omega\times \widetilde{Z}^{-1}(B)$, thus implying that
	\begin{align}&P\times \widetilde{P}\left(Y_1^{-1}(A)\cap \widetilde{Z}_1^{-1}(B)\right)=P\times \widetilde{P}\left(Y^{-1}(A)\times \widetilde{Z}^{-1}(B)\right) \nonumber\\&= P\left(Y^{-1}(A)\right) \widetilde{P} \left(\widetilde{Z}^{-1}(B)\right) = P\times \widetilde{P}\left(Y_1^{-1}(A)\right) P\times \widetilde{P} \left(\widetilde{Z}_1^{-1}(B)\right).\label{indpro} \end{align} }
	
	\cs{Let $\widetilde{\mathbb{Z}}_m=(\widetilde{Z}_m(x),\ x\in I)$ be a stochastic process such that $\widetilde{Z}_m(x)$ takes values in $I$, for any $x\in I$, and satisfies
	\begin{equation}
		\widetilde{Z}_m(x)\xrightarrow{({\cal L})}x, \quad x\in I, \quad m\rightarrow \infty.\label{convez}
	\end{equation}}
	Associated to  $\widetilde{\mathbb{Z}}_m$, we define the positive linear operator
\cs{	\begin{equation}
		L_m(f;x)=\widetilde{\mathbb{E}}f(\widetilde{Z}_m(x))=\int_{\mathbb{R}}f(\theta)d\widetilde{F}_{m,x}(\theta),\quad x\in I,\quad f\in {\cal M}(I),\label{dflino}
	\end{equation}
	where $\widetilde{F}_{m,x}$ is the distribution function of $\widetilde{Z}_m(x)$ and ${\cal M}(I)$} is the set of all real measurable functions defined on $I$ for which the preceding expectations exist. We assume that
	\begin{equation}
		L_m( {\cal M}(I))\subseteq C^k(I),\quad k\in \mathbb{N}_0.\label{operco}
	\end{equation}
\cs{The Continuous Mapping Theorem and assumption (\ref{convez}) guarantees} that $L_m(f;x)\rightarrow f(x)$, as $m\rightarrow \infty$, for any bounded function $f\in C(I)$. By (\ref{operco}), $L_m(f;x)$ is a smooth approximant of $f(x)$.
	
	Before going further, we consider two well known examples of operators $L_m$ satisfying (\ref{convez})-(\ref{operco}) (see, for instance \cite{adcabe}, for further examples).
	
	\begin{example}[{{\bf Bernstein polynomials.} $I=[0,1]$}] \cs{Let $x\in [0,1]$.} Let $(\widetilde{U}_j)_{j\geq 1}$ be a sequence of independent copies of a random variable $\widetilde{U}$ uniformly distributed on $[0,1]$ and consider the uniform empirical process of size $m$
		\begin{equation}
			\widetilde{Z}_m(x) =\frac{	\widetilde{S}_m(x)}{m}, \qquad 		\widetilde{S}_m(x)=\sum_{j=1}^{m}1_{[0,x]}(	\widetilde{U}_j), \label{empin}
		\end{equation}
		where $1_A$ stands for the indicator function of the set $A$.	The $m$-th Bernstein polynomial of a function $f\in {\cal M}([0,1])$ is defined by
		\begin{equation}B_m(f;x)=\widetilde{\mathbb{E}}f\left(\frac{	\widetilde{S}_m(x)}{m}\right)=\sum_{k=0}^m f\left(\frac{k}{m}\right)p_{m,k}(x),\quad p_{m,k}(x)={m \choose k}x^k(1-x)^{m-k}. \label{berpol}
		\end{equation} \label{exampb}
	\end{example}

	\begin{example}[{\bf Weierstrass-type operators}. $I=\mathbb{R}$]\label{exampl2} \cs{Let $x\in \mathbb{R}$.}  Let $\widetilde{W}$ be \cs{an arbitrary real} valued random variable \cs{with distribution function $\widetilde{F}$} and let $\left(h_m\right)_{m\geq 1}$ be a sequence of positive real numbers converging to $0$. The Weierstrass-type operators are defined by
	\cs{	\[W_m(f;x)=\mathbb{\widetilde{E}}f(x+h_m\widetilde{W})=\int_{\mathbb{R}}f(x+h_m\theta)d\widetilde{F}(\theta),\quad f\in {\cal M}(\mathbb{R}).  \]}If $\widetilde{W}$ has the standard normal distribution and $h_m=1/\sqrt{m}$,  $W_m$ is called the $m$-th Weierstrass operator.  \cs{In such a case, denote by $\eta(\theta)$ the standard normal density.  Recall that the Hermite polynomials $(H_k(\theta))_{k\geq 0}$ are defined via the Rodrigues formula 
		\[H_k(\theta)=(-1)^k \frac{\eta^{(k)}(\theta)}{\eta(\theta)},\quad \theta\in \mathbb{R},\quad k\in \mathbb{N}_0.\]
		Let $k\in \mathbb{N}$ and $x\in \mathbb{R}$.  The derivatives of the Weierstrass operator have the form
		\begin{equation}
			W_m^{(k)}(f;x)=\frac{1}{h_m^k}\widetilde{E}f\left(x+h_m\widetilde{W}\right)H_k(\widetilde{W}),\quad  f\in {\cal M}(\mathbb{R}) ,\label{dewedi}
			\end{equation} 
		or
			\[
			W_m^{(k)}(f;x)=\widetilde{E}f^{(k)}\left(x+h_m\widetilde{W}\right),\quad f\in C^k(\mathbb{R}).
			\]  
			Such formulas are analogous to the expressions given in (\ref{debedi}) and (\ref{debeka}) below for the Bernstein polynomials.  As a consequence of (\ref{dewedi}), note that we have the Wiener expansion
			\[W_m(f;x)=\sum_{k=0}^\infty \frac{1}{k!}\mathbb{\widetilde{E}}f(h_m\widetilde{W})H_k(\widetilde{W})\left(\frac{x}{h_m}\right)^k,\quad |x|<h_m,\] 
			whenever $\mathbb{\widetilde{E}}f^2(h_m\widetilde{W})<\infty.$  }\label{exampw}
	\end{example}
	
	Roughly speaking, if in formula (\ref{dflino}), we choose a random function $f$ independent of $\widetilde{Z}_m$, we obtain a random positive linear operator.  More precisely, let $\mathbb{Y}_n=\left(Y_n(x),\ x\in I\right)$ be a stochastic process.  \cs{As follows from (\ref{varpro}) and (\ref{indpro}), no generality is lost if we assume that the stochastic processes $\mathbb{Y}_n$ and  $\mathbb{\widetilde{Z}}_m$, as defined on $\Omega\times \widetilde{\Omega}$, are independent. We then consider the subordinated stochastic process $\left(Y_n(\mathbb{\widetilde{Z}}_m(x)),\ x \in I\right)$ and define the random positive linear operator 
	\begin{equation}
		L_m( Y_n;x)=\widetilde{\mathbb{E}}Y_n(\widetilde{Z}_m(x))=\int_{\mathbb{R}}Y_n(\theta)d\widetilde{F}_{m,x}(\theta),\quad x\in I,\label{dfranl}
		\end{equation}
		where $\widetilde{F}_{m,x}(\theta)$ is the distribution function of $\widetilde{Z}_m(x)$. In other words, we have replaced in (\ref{dflino}) the deterministic function $f(\theta)$ by the random function $Y_n(\theta)$.  Note that the paths of $	L_m( Y_n;x)$ are in $C^k(I)$,\  $k\in \mathbb{N}$, as follows from (\ref{operco}).}
	
	One of the main motivations of definition (\ref{dfranl}) comes from the following estimation problem.  Assume that the process $\mathbb{Y}_n$ is integrable. and that
	\begin{equation} \mathbb{E}Y_n(x)=\mu(x)\quad x\in I,\label{expecc}\end{equation}
	as well as 
	\begin{equation}
		Y_n(x)\xrightarrow{({\cal L})}\mu(x), \quad x\in I, \quad n\rightarrow \infty, \label{convey}
	\end{equation}
	where $\mu\in C(I)$ is unknown. Instead of using the process $\mathbb{Y}_n$  as an estimator of $\mu$, we consider the following approximation
\cs{	\begin{align}
		&L_m(Y_n;x)-\mu(x)=\int_{\mathbb{R}}Y_n(\theta)d\widetilde{F}_{m,x}(\theta)-\mu(x)\nonumber \\
		&=\int_{\mathbb{R}}\left(Y_n(\theta)-\mu(\theta)\right)d\widetilde{F}_{m,x}(\theta)-\int_{\mathbb{R}}\left(\mu(\theta)-\mu(x)\right)d\widetilde{F}_{m,x}(\theta) \nonumber \\&=L_m(Y_n-\mu;x)+L_m(\mu;x)-\mu(x),\quad x\in I.\label{decomp}
	\end{align}
	The term $L_m(\mu;x)-\mu(x)$ is }deterministic and can be estimated using approximation-theoretic techniques based on appropriate moduli of smoothness. By assumption (\ref{operco}), the random term $L_m( Y_n-\mu;x)$ has paths in $C^k(I)$. This implies, under the assumption that $\mu\in C^k(I)$, for some $k\in \mathbb{N}_0$, that we can differentiate in (\ref{decomp}) to obtain the approximation 
	\begin{equation}
		L_m^{(k)}(Y_n;x)-\mu^{(k)}(x)=L_m^{(k)}( Y_n-\mu;x)+L_m^{(k)}(\mu;x)-\mu^{(k)}(x),\quad x\in I,\label{decomd}
	\end{equation}
	thus giving a procedure to estimate $\mu^{(k)}$ by using the smooth estimator $L_m^{(k)}(Y_n;x)$.
	
	In this paper, attention is focused on the case in which $\mu=F\in C[0,1]$ is the distribution function of a certain random variable $X$ taking values in $[0,1]$, and $\mathbb{Y}_n$ is its corresponding empirical process. Specifically, let $\left(X_r\right)_{r\geq 1}$ be a sequence of independent copies of $X$ and define
	\begin{equation}
		Y_n(x)=\frac{1}{n}\sum_{r=1}^{n}1_{(-\infty,x]}(X_r), \quad x\in [0,1]. \label{pampiri}
	\end{equation}
	To rewrite this process,  let $\left(U_r\right)_{r\geq 1}$ be a sequence of independent copies of a random variable $U$ uniformly distributed on $[0,1]$ and denote
	\begin{equation}
		S_n(x)=\sum_{r=1}^{n}1_{[0,x]}(U_r), \quad x\in [0,1]. \label{empiri}
	\end{equation}
	Let 
	\[	
	F^{\star}(u)=\inf\{x\in \mathbb{R},\  F(x)\geq u\},\quad u\in(0,1),\]
	be the quantile function of $X$. Since $F^{\star}(U) \overset{({\cal L})}{=}X$, where $ \overset{({\cal L})}{=}$ stands for equality in law, we can write
	\begin{equation}
		Y_n(x) \overset{({\cal L})}{=}\frac{S_n(F(x))}{n},\quad x\in [0,1]. \label{eqlaym}
	\end{equation}
	By Example \ref{exampb} and (\ref{dfranl}), the random Bernstein polynomials associated to $\mathbb{Y}_n$ are
	\begin{equation}
		B_m(Y_n;x)=\frac{1}{n}\widetilde{\mathbb{E}}S_n\left(F(\widetilde{S}_m(x)/m)\right)=\frac{1}{n}\sum_{k=0}^m S_n\left(F(k/m)\right)p_{m,k}(x), \quad x\in[0,1].\label{rabern} 
	\end{equation}
	If the random variable $X$ takes values in $[0,\infty)$ or $\mathbb{R}$, we should consider the random Sz\`asz-Mirakyan operator (as used in Ouimet \cite{ouonth} and Hanebeck and Klar \cite{haklsm}) or the random Weierstrass-type operator associated to $\mathbb{Y}_n$, respectively. For instance, in this last case, let $\mathbb{Y}_n$ be as in (\ref{pampiri}), with $x\in \mathbb{R}$. \cs{As in Example \ref{exampw}, let  $\widetilde{F}(\theta)$ be the distribution function of $\widetilde{W}$. By (\ref{dfranl}), the associated random Weierstrass-type operator applied to $-\widetilde{W}$ takes on the form 
	\begin{align}
	&W_m(Y_n;x)=\widetilde{E}Y_n(x-h_m\widetilde{W})=\int_{\mathbb{R}}Y_n(x-h_m\theta)d\widetilde{F}(\theta) \nonumber\\
&=	\frac{1}{n}\sum_{r=1}^{n}\int_{\mathbb{R}}1_{(-\infty,x-h_m\theta]}(X_r)d\widetilde{F}(\theta)=\frac{1}{n}\sum_{r=1}^{n}\int_{\mathbb{R}}1_{(-\infty,\left(x-X_r\right)/h_m]}(\theta)d\widetilde{F}(\theta)\nonumber\\
&=\frac{1}{n}\sum_{r=1}^{n}\widetilde{F}\left(\frac{x-X_r}{h_m}\right),  \quad x\in \mathbb{R}. \label{ranwei}\end{align}} This expression gives us the classical kernel estimator of $F$, the distribution function of $X$ (cf. Tsybakov \cite{tsintr}).
	
	Many positive linear operators, such as those considered in the examples above, preserve monotonicity, that is, $L_m(f,x)$ is nondecreasing if $f(x)$ is so. By (\ref{dflino}), this is equivalent to the fact that the stochastic process $(\widetilde{Z}_m(x),\ x\in I)$ has nondecreasing paths (cf. Shaked and Shanthikumar \cite[Chap. 1]{shshst}). If this is the case, and  whenever  $\mathbb{Y}_n$ is an empirical process, and $\mu=F$ is a distribution function, then formulas (\ref{decomp}) and (\ref{decomd}) approximate $F$ and $F^{(1)}$ by smooth random distribution functions and densities, respectively. 
	\section{Bernstein-Kantorovich type operators} \label{seappr}
	In this section, we consider some approximation properties of Bernstein and Bernstein-Kantorovich type operators, which will be used in estimating distribution functions and their derivatives, respectively. From now on, we assume that all of the random variables appearing under the same expectation sign are mutually independent. Also, we denote by $\sigma^2(x)=x(1-x),\ x\in [0,1]$.
	
	Let $k\in \mathbb{N}_0$, $h\geq 0$, and $0\leq x\leq x+k h\leq 1$. Recall that the $k$-th forward differences of a function $f$ defined on $[0,1]$ at step $h$ are recursively defined as
	\begin{equation}
		\Delta_h^0 f(x)=f(x), \ 	\Delta_h^1f(x)=f(x+h)-f(x),\ \Delta_h^kf(x)=\Delta_h^{k-1}\left(\Delta_h^1f\right)(x), \ k\geq 2, \label{eqfwdi}
	\end{equation}
	or, equivalently, by
	\begin{equation}
		\Delta_h^kf(x)=\sum_{j=0}^k {k \choose j}(-1)^{k-j}f(x+jh).\label{eqfwd2}
	\end{equation}
	Let $\sequ{\widetilde{V}}{j}$ be a sequence of independent copies of a random variable $\widetilde{V}$ uniformly distributed on $[0,1]$.  It turns out (cf. \cite{adlebi}) that
	\begin{equation}
		\Delta_h^kf(x)=h^k\widetilde{E}f^{(k)}\left(x+h(\widetilde{V}_1+\dots+\widetilde{V}_k)\right), \quad f\in C^k[0,1], \quad x\in[0,1].\label{eqfwsm}
	\end{equation}
	On the other hand, 	the usual first modulus of continuity of $f\in C[0,1]$ is defined as
	\begin{equation}\omega(f;\delta)=\sup\left\{\left|f(x)-f(y)\right|:\ x,y\in [0,1],\ |x-y|\leq\delta\right\},\quad \delta\geq 0,\label{dffimo}\end{equation}
	whereas the Ditzian-Totik second modulus of smoothness of $f$ with weight function $\phi\geq 0$ is defined as 
	\begin{equation}\omega_2^{\phi}(f;\delta)=\sup\left\{|\Delta_{h\phi(x)}^2f(x)|, \ x\pm h\phi(x)\in [0,1],\ 0\leq h\leq \delta\right\},\quad \delta \geq 0. \label{wesecm}\end{equation}
	If $\phi\equiv 1$, $\omega_2(f;\cdotp):=\omega_2^1(f;\cdotp)$ is the usual second modulus of continuity of $f$.  With regard to the Bernstein polynomials, we take from  \cs{P\u{a}lt\u{a}nea \cite{paonso} and \cite[pp. 94-96]{paappr}} the following approximation result.
	
	\begin{thm} Let $f\in C[0,1]$.  Then,
		\begin{equation}
			\left|B_m(f;x)-f(x)\right|\leq \cs{\frac{3}{2}}\omega_2\left(f;\frac{\sigma(x)}{\sqrt{m}}\right),\quad x\in[0,1], \label{beupbo}
		\end{equation}
		and 
		\begin{equation}
			\left\|B_m(f;x)-f(x)\right\|\leq \frac{5}{2}\omega_2^{\sigma}\left(f;\frac{1}{\sqrt{m}}\right).
			\label{beunbo}
		\end{equation}\label{tecobe}
	\end{thm}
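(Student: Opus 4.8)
The plan is to run, for both inequalities, the classical two-step scheme ``smooth estimate plus $K$-functional'', taking advantage of the probabilistic representation \eqref{berpol}, that is, $B_m(f;x)=\widetilde{\mathbb{E}}f(\widetilde{S}_m(x)/m)$ with $\widetilde{S}_m(x)$ binomially distributed with parameters $m$ and $x$. Two elementary facts are needed at the start: $B_m$ reproduces affine functions, and $B_m\bigl((e_1-x)^2;x\bigr)=\sigma^2(x)/m$, where $e_1(t)=t$; both are immediate moment computations for the binomial law. Writing $\widetilde{Z}_m(x)=\widetilde{S}_m(x)/m$, Taylor's formula with integral remainder together with affine reproduction gives, for $g\in C^2[0,1]$,
\[
B_m(g;x)-g(x)=\widetilde{\mathbb{E}}\int_x^{\widetilde{Z}_m(x)}\bigl(\widetilde{Z}_m(x)-u\bigr)\,g''(u)\,du .
\]
Bounding $|g''|$ by $\|g''\|$ yields the pointwise estimate $|B_m(g;x)-g(x)|\le \tfrac12\|g''\|\,\sigma^2(x)/m$; bounding instead $|g''(u)|\le \|\sigma^2 g''\|/\bigl(u(1-u)\bigr)$ and estimating the remaining expectation $\widetilde{\mathbb{E}}\bigl|\int_x^{\widetilde{Z}_m(x)}(\widetilde{Z}_m(x)-u)/(u(1-u))\,du\bigr|$ by $C/m$ for an absolute constant $C$ yields the weighted estimate $\|B_m(g;\cdot)-g\|\le (C/m)\|\sigma^2 g''\|$. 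These are the two ``smooth'' inequalities on which everything rests.

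For \eqref{beupbo}, introduce the Peetre $K$-functional $K(f;t)=\inf\{\|f-g\|+t\|g''\|:g\in C^2[0,1]\}$. Fixing $x$ and any admissible $g$, positivity of $B_m$ and $B_m1=1$ give
\[
|B_m(f;x)-f(x)|\le |B_m(f-g;x)|+|(f-g)(x)|+|B_m(g;x)-g(x)|\le 2\|f-g\|+\frac{\sigma^2(x)}{2m}\,\|g''\|,
\]
hence $|B_m(f;x)-f(x)|\le 2\,K\bigl(f;\sigma^2(x)/(4m)\bigr)$ after taking the infimum over $g$. Combining this with the sharp comparison of $K$ with the second modulus of continuity $\omega_2$ (the case $\phi\equiv 1$ of \eqref{wesecm}) --- an inequality of the form $K(f;s^2)\le c\,\omega_2(f;2s)$ whose optimal constant is what makes $2c$ equal to $\tfrac32$ --- produces the bound $\tfrac32\,\omega_2\bigl(f;\sigma(x)/\sqrt m\bigr)$.

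For \eqref{beunbo} the same argument is carried out with the weighted (Ditzian--Totik) $K$-functional $K_\sigma(f;t)=\inf\{\|f-g\|+t\|\sigma^2 g''\|:g\in C^2[0,1]\}$ replacing $K$: positivity gives $\|B_m(f;\cdot)-f\|\le 2\|f-g\|+(C/m)\|\sigma^2 g''\|$, hence a bound of the form $2\,K_\sigma(f;C'/m)$, which is then turned into $\tfrac52\,\omega_2^\sigma(f;1/\sqrt m)$ through the equivalence between $K_\sigma$ and the Ditzian--Totik modulus \eqref{wesecm} with $\phi=\sigma$.

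The main obstacle is not the strategy but the sharpness of the constants $\tfrac32$ and $\tfrac52$, which is exactly why the statement is quoted from P\u{a}lt\u{a}nea rather than proved here. Two points need care. First, the weight $u(1-u)$ degenerates at $0$ and $1$, so the bound $\widetilde{\mathbb{E}}\bigl|\int_x^{\widetilde{Z}_m(x)}(\widetilde{Z}_m(x)-u)/(u(1-u))\,du\bigr|\le C/m$ has to be established by treating the terms $k=0$ and $k=m$ in the sum \eqref{berpol} separately --- there the factors $(1-x)^m$ and $x^m$ absorb a logarithmic singularity --- and by splitting $[0,1]$ into a central block and two boundary strips of width of order $1/m$, on which one again uses affine reproduction. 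Second, the two passages through $K$-functional/modulus equivalences must be carried out with the optimal constants; it is this constant bookkeeping, rather than any single difficult inequality, that produces the clean final values.
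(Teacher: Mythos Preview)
The paper does not prove this theorem at all: it is quoted verbatim from P\u{a}lt\u{a}nea \cite{paonso} and \cite[pp.~94--96]{paappr}, with no argument supplied. So there is no ``paper's own proof'' to compare against, and you yourself acknowledge this in your proposal.

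Your outline is the standard $K$-functional route and is conceptually sound as a way to obtain \emph{some} inequalities of the form \eqref{beupbo}--\eqref{beunbo}, but it does not actually deliver the stated constants. The step ``$K(f;s^2)\le c\,\omega_2(f;2s)$ whose optimal constant is what makes $2c$ equal to $\tfrac32$'' is where the argument breaks: the sharp constants in the Johnen-type equivalence between $K$ and $\omega_2$ are not small enough to produce $3/2$ after being multiplied by the factor $2$ coming from $2\|f-g\|$. P\u{a}lt\u{a}nea's proof of \eqref{beupbo} bypasses the $K$-functional altogether and works directly with the second difference and a refined estimate of the Bernstein kernel; the constant $3/2$ emerges from that specific analysis, not from constant bookkeeping in a generic equivalence. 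The same remark applies to the Ditzian--Totik bound \eqref{beunbo}: your weighted estimate $\|B_m g - g\|\le (C/m)\|\sigma^2 g''\|$ is correct (indeed $C=1/2$ is classical), but pushing the resulting $K_\sigma$-bound through the modulus equivalence does not land on $5/2$ without P\u{a}lt\u{a}nea's sharper intermediate inequalities. In short, your sketch proves the theorem with unspecified absolute constants; the values $3/2$ and $5/2$ require the cited references, which is precisely why the paper cites them.
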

	Let $k\in \mathbb{N}_0$ and $m>k$. Let $\widetilde{T}_k$ be a random variable taking values in $[0,k]$.  We define the positive linear operators
	\begin{equation}
		L_{m,k}(f;x)=\widetilde{E}f\left(\frac{\widetilde{S}_{m-k}(x)+\widetilde{T}_k}{m}\right),\quad f\in {\cal M}([0,1]), \quad x\in[0,1],\label{dfkant}
	\end{equation}
	\cs{where the random variable $\widetilde{S}_{m-k}(x)$ is defined in (\ref{empin})}.  In the particular case in which $\widetilde{T}_k=\widetilde{V}_1+\dots+\widetilde{V}_k$, ($\widetilde{T}_0=0$), where the sequence of random variables $\sequ{\widetilde{V}}{j}$ is defined as in (\ref{eqfwsm}), the operator $L_{m,k}$ is called the Bernstein-Kantorovich operator (see, for instance \cite{acrabe}, \cite{adcaon}, and the references therein). In such a case, we denote $B_{m,k}:=L_{m,k}$.  Note that $B_{m,0}=B_m$.
	
	The Bernstein-Kantorovich operators describe the derivatives of Bernstein polynomials when acting on smooth functions.  More precisely, we have (cf. \cite{adcast}) 
	\begin{equation}
		B_m^{(k)}(f;x)=(m)_k\widetilde{E}\Delta^k_{1/m}f\left(\frac{\widetilde{S}_{m-k}(x)}{m}\right),\ \quad f\in {\cal M}([0,1]),\quad x \in [0,1], \label{debedi}
	\end{equation}
	where $(m)_k=m(m-1)\dots(m-k+1), \ k\in \mathbb{N}$, $((m)_0=1)$ is the falling factorial, as well as
	\begin{equation}
		B_m^{(k)}(f;x)=\frac{(m)_k}{m^k}B_{m,k}(f^{(k)};x),\ \quad f\in C^k[0,1],\quad x \in [0,1]. \label{debeka}
	\end{equation}
	The approximation properties of the Bernstein-Kantorovich type operators defined in (\ref{dfkant}) are given in the following result.
	\begin{thm} \label{thkabo}
		Let $k\in \mathbb{N}_0$ and $m>k$. For any $f\in C[0,1]$, we have
		\begin{equation}
			|L_{m,k}(f;x)-f(x)|\leq 2k\omega\left(f;\frac{1}{m}\right)+\frac{3}{2}\omega_2\left(f;\frac{\sigma(x)}{\sqrt{m}}\right),\quad x\in[0,1], \label{thkab1}
		\end{equation}
		and
		\begin{equation}
			\|L_{m,k}(f;x)-f(x)\|\leq 2k\omega\left(f;\frac{1}{m}\right)+\frac{5}{2}\omega_2^{\sigma}\left(f;\frac{1}{\sqrt{m}}\right). \label{thkab2}
		\end{equation}
	\end{thm}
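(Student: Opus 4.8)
The plan is to compare $L_{m,k}(f;x)$ with the genuine $m$-th Bernstein polynomial $B_m(f;x)$ of (\ref{berpol}) and then to invoke Theorem~\ref{tecobe}. The one point that could trip one up is that the comparison must be made against $B_m$, of degree exactly $m$, and \emph{not} against $B_{m-k}$: the latter would replace $\sigma(x)/\sqrt m$ by the larger quantity $\sigma(x)/\sqrt{m-k}$ in (\ref{beupbo})--(\ref{beunbo}) and thereby enlarge the second-modulus term, so it would not yield the stated bounds.

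Set $G(f;x):=\widetilde E f\bigl(\widetilde S_{m-k}(x)/m\bigr)$, with $\widetilde S_{m-k}(x)$ as in (\ref{empin}). Since $\widetilde T_k$ takes values in $[0,k]$ and is independent of $\widetilde S_{m-k}(x)$, the random points $(\widetilde S_{m-k}(x)+\widetilde T_k)/m$ and $\widetilde S_{m-k}(x)/m$ lie in $[0,1]$ and differ by $\widetilde T_k/m\le k/m$. Conditioning on $\widetilde S_{m-k}(x)$ and using the definition (\ref{dffimo}) of $\omega$, the monotonicity of $\omega(f;\cdot)$ and the elementary inequality $\omega(f;k\delta)\le k\,\omega(f;\delta)$, we obtain
\[
\bigl|L_{m,k}(f;x)-G(f;x)\bigr|\le \widetilde E\,\omega\bigl(f;\widetilde T_k/m\bigr)\le \omega(f;k/m)\le k\,\omega(f;1/m),\qquad x\in[0,1].
\]
Writing $\widetilde S_m(x)=\widetilde S_{m-k}(x)+\widetilde R_k(x)$ with $\widetilde R_k(x)=\sum_{j=m-k+1}^{m}1_{[0,x]}(\widetilde U_j)\in\{0,1,\dots,k\}$, which is independent of $\widetilde S_{m-k}(x)$, exactly the same estimate applied to $B_m(f;x)=\widetilde E f\bigl(\widetilde S_m(x)/m\bigr)$ gives $\bigl|B_m(f;x)-G(f;x)\bigr|\le k\,\omega(f;1/m)$, uniformly in $x$. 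Hence, by the triangle inequality,
\[
\bigl|L_{m,k}(f;x)-B_m(f;x)\bigr|\le 2k\,\omega(f;1/m),\qquad x\in[0,1].
\]

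Finally, decompose $L_{m,k}(f;x)-f(x)=\bigl(L_{m,k}(f;x)-B_m(f;x)\bigr)+\bigl(B_m(f;x)-f(x)\bigr)$, bound the first summand by $2k\,\omega(f;1/m)$ as just shown, and bound the second by $\frac{3}{2}\,\omega_2\bigl(f;\sigma(x)/\sqrt m\bigr)$ via (\ref{beupbo}) to get (\ref{thkab1}), or, after taking the supremum over $x\in[0,1]$, by $\frac{5}{2}\,\omega_2^{\sigma}\bigl(f;1/\sqrt m\bigr)$ via (\ref{beunbo}) to get (\ref{thkab2}). The case $k=0$ (where $\widetilde T_0=0$ and $G=B_m=L_{m,0}$) reduces directly to Theorem~\ref{tecobe}. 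Beyond keeping track of the independence assumptions so that the conditioning steps are legitimate, no real difficulty is expected.
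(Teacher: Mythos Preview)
Your argument is correct and, in fact, slightly slicker than the paper's. The paper does \emph{not} compare with $B_m$; instead it introduces the rescaling $f_a(y)=f(ay)$ with $a=1-k/m$, writes
\[
L_{m,k}(f;x)-f(x)=\bigl(L_{m,k}(f;x)-G(f;x)\bigr)+\bigl(B_{m-k}(f_a;x)-f_a(x)\bigr)+\bigl(f(ax)-f(x)\bigr),
\]
where $G(f;x)=\widetilde E f(\widetilde S_{m-k}(x)/m)=B_{m-k}(f_a;x)$, bounds the outer terms by $\omega(f;k/m)$, and applies Theorem~\ref{tecobe} to $B_{m-k}$ acting on $f_a$. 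The apparent loss $\sigma(x)/\sqrt{m-k}$ you warned about is then repaired by the scaling inequalities $\omega_2(f_a;\delta)\le\omega_2(f;a\delta)$ and $\omega_2^{\sigma}(f_a;\delta)\le\omega_2^{\sigma}(f;\delta\sqrt a)$, which bring the argument back down to $\sigma(x)/\sqrt m$ and $1/\sqrt m$.

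Your route, coupling $\widetilde S_m(x)=\widetilde S_{m-k}(x)+\widetilde R_k(x)$ and bounding $|L_{m,k}(f;x)-B_m(f;x)|\le 2k\,\omega(f;1/m)$ directly, sidesteps the rescaling lemma entirely and invokes Theorem~\ref{tecobe} for $B_m$ without any adjustment. So your remark that one ``must'' compare with $B_m$ is too strong: the paper shows that comparing with $B_{m-k}$ works too, at the price of the auxiliary inequalities~(\ref{desmod}). What your approach buys is brevity and the elimination of that extra lemma; what the paper's approach buys is a template that would adapt more readily to operators where no natural ``add $k$ more summands'' coupling is available.
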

	\begin{proof}
		Consider the function $f_a(y)=f(ay), \ y\in[0,1]$, $0<a\leq 1$. It turns out that
		\begin{equation}
			\omega_2(f_a;\delta)\leq \omega_2(f;a\delta),\quad \omega_2^{\sigma}(f_a;\delta)\leq \omega_2^{\sigma}(f;\delta\sqrt{a}), \quad \delta \geq 0. \label{desmod}
		\end{equation}
		Actually, the first inequality readily follows from definition (\ref{wesecm}), whereas the second one also follows from (\ref{wesecm}) and the fact that
		\[\frac{a\sigma(x)}{\sigma(ax)}=\sqrt{\frac{a(1-x)}{1-ax}}\leq \sqrt{a}.\]
		Let $a=1-\frac{k}{m}$. By (\ref{dfkant}), we can write
		\begin{align*}&L_{m,k}(f;x)-f(x)=\widetilde{E}f\left(a\frac{\widetilde{S}_{m-k}(x)}{m-k}+\frac{\widetilde{T}_k}{m}\right)-
			\widetilde{E}f\left(a\frac{\widetilde{S}_{m-k}(x)}{m-k}\right)\\ &+	\widetilde{E}f\left(a\frac{\widetilde{S}_{m-k}(x)}{m-k}\right)-f\left(a x\right)+f\left( a x\right)-f(x).\end{align*}
		By (\ref{dffimo}), this implies that
		\[|L_{m,k}(f;x)-f(x)|\leq 2\omega\left(f;\frac{k}{m}\right)+|B_{m-k}(f_a;x)-f_a(x)|.\]
		Therefore, (\ref{thkab1}) follows from Theorem \ref{tecobe}, (\ref{desmod}), and the subadditivity property of $\omega\left(f;\cdotp\right)$. Inequality (\ref{thkab2}) is shown in a similar way.
	\end{proof}
	The rate of uniform convergence in (\ref{thkab2}) for $\|L_{m,k}(f;x)-f(x)\|$ cannot be better than $m^{-1}$ (see \cite{salowe}, \cite{adcast}, and the references therein).  Such a rate is attained  when $f\in C^2[0,1]$, since
	\[\omega_2^{\sigma}\left(f;\delta\right)\sim \|\sigma^2 f^{(2)}\|\delta^2,\quad \delta \rightarrow 0.\]
	Similarly, the order of magnitude of the upper bound in (\ref{thkab1}) is $m^{-1}$ at most.  However, this order of magnitude can be improved if $f\in C^k[0,1], \ k>2$, and some further assumptions on the derivatives of $f$ are made, as follows by considering the Taylor's expansion of $f$ (cf. \cite{adcaon}).  Even more, if $f$ is locally constant, the rates of convergence become exponential.  Indeed, for a fixed $x\in (0,1)$, consider the nonnegative convex function
	\[r(x,\theta)=\theta \log\frac{\theta}{x}+(1-\theta) \log\frac{1-\theta}{1-x}
	,\quad 0\leq \theta \leq 1.\]
	We take from \cite{adcaon} the following result.
	\begin{thm} In the setting of Theorem \ref{thkabo}, assume further that $f(t)=c$, $t\in(a,b)$, $0<a<b<1$, for some constant $c$.  For any $x\in \left(ma/(m-k), \cs{(mb-k)/(m-k)}\right)$, we have
		\begin{align*}&|L_{m,k}(f;x)-f(x)|\leq \\&\|f- c\|\left(\exp\left(-(m-k)r\kern-2pt \left(x,\frac{ma}{m-k}\right)\right)+\exp\left(-(m-k)r \kern-2pt \left(x,\cs{\frac{mb-k}{m-k}}\right)\right)\right).
		\end{align*}
		\label{thnull}
	\end{thm}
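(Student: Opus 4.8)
The plan is to exploit the probabilistic representation (\ref{dfkant}) of $L_{m,k}$, splitting the expectation according to whether the random argument $\theta_{m,k}:=(\widetilde{S}_{m-k}(x)+\widetilde{T}_k)/m$ falls inside the interval $(a,b)$ where $f$ is constant, or outside it. On the event $\{a<\theta_{m,k}<b\}$ we have $f(\theta_{m,k})=c$, so that contribution equals $c$ times the probability of that event; subtracting $f(x)=c$ (note that the hypothesis $x\in(ma/(m-k),(mb-k)/(m-k))$ together with $\widetilde{S}_{m-k}(x)$ having mean $(m-k)x$ forces $x\in(a,b)$, hence $f(x)=c$), we are left with
\[
|L_{m,k}(f;x)-c|=\left|\widetilde{E}\bigl(f(\theta_{m,k})-c\bigr)1_{\{\theta_{m,k}\notin(a,b)\}}\right|\le \|f-c\|\,\widetilde{P}\bigl(\theta_{m,k}\le a\bigr)+\|f-c\|\,\widetilde{P}\bigl(\theta_{m,k}\ge b\bigr).
\]
So everything reduces to bounding the two tail probabilities of $\theta_{m,k}$.

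Next I would reduce these tails to tails of the binomial variable $\widetilde{S}_{m-k}(x)$ alone, using that $\widetilde{T}_k$ takes values in $[0,k]$. Indeed $m\theta_{m,k}=\widetilde{S}_{m-k}(x)+\widetilde{T}_k$, so $\{\theta_{m,k}\le a\}\subseteq\{\widetilde{S}_{m-k}(x)\le ma\}$ and $\{\theta_{m,k}\ge b\}\subseteq\{\widetilde{S}_{m-k}(x)+k\ge mb\}=\{\widetilde{S}_{m-k}(x)\ge mb-k\}$. Writing $N:=m-k$, so that $\widetilde{S}_N(x)$ is Binomial$(N,x)$ with mean $Nx$, the hypothesis on $x$ says precisely that $ma/N<x$ and $x<(mb-k)/N$, i.e. the threshold $ma$ for the lower tail lies below the mean $Nx$ and the threshold $mb-k$ for the upper tail lies above it. Hence both events are genuine large-deviation events for a binomial.

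The final step is to apply the classical Chernoff/Hoeffding bound for the binomial in its sharp relative-entropy form: for Binomial$(N,x)$ and a threshold $t<Nx$ one has $\widetilde{P}(\widetilde{S}_N(x)\le t)\le \exp(-N\,r(x,t/N))$, and for $t>Nx$ likewise $\widetilde{P}(\widetilde{S}_N(x)\ge t)\le\exp(-N\,r(x,t/N))$, where $r(x,\theta)$ is exactly the convex function displayed just before the theorem (this is the standard Cram\'er transform of a Bernoulli$(x)$ variable, and the convexity of $r(x,\cdot)$ with minimum $0$ at $\theta=x$ is what makes both one-sided bounds valid). Taking $t=ma$ in the lower tail gives $t/N=ma/(m-k)$, and $t=mb-k$ in the upper tail gives $t/N=(mb-k)/(m-k)$; substituting $N=m-k$ yields the two exponential terms in the statement. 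Adding the two bounds and multiplying by $\|f-c\|$ gives the claimed inequality.

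I expect the only real subtlety to be bookkeeping: checking that the hypothesis $x\in(ma/(m-k),(mb-k)/(m-k))$ is exactly the condition needed both to place $x$ inside $(a,b)$ (so that $f(x)=c$) and to make each threshold lie on the correct side of the binomial mean (so that the one-sided Chernoff bound applies with the stated sign). The concentration inequality itself is entirely standard and can be cited from, e.g., \cite{adcaon}; no delicate estimate beyond it is required.
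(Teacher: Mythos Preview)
Your argument is correct and complete in outline: the splitting according to whether $\theta_{m,k}\in(a,b)$, the reduction to binomial tails via $0\le\widetilde{T}_k\le k$, and the relative-entropy Chernoff bound are exactly the right ingredients, and your bookkeeping on the interval condition is accurate. Note, however, that the paper does not actually prove this theorem---it is quoted from \cite{adcaon} (``We take from \cite{adcaon} the following result'')---so there is no in-paper proof to compare against; your sketch is the natural proof and almost certainly coincides with the one in that reference.
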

	\section{Confidence bands} \label{seconf}
	Unless otherwise specified, we assume from now on that $x\in (0,1)$ and that $F\in C[0,1]$ is the distribution function of a certain random variable $X$. We start from identity (\ref{decomp}) for the Bernstein polynomials when $\mu=F$, that is,
	\begin{equation}
		B_m(Y_n;x)-F(x)=B_m( Y_n-F;x)+B_m(F;x)-F(x),\label{decobe}
	\end{equation}
	where $\mathbb{Y}_n$ is the empirical process defined in (\ref{pampiri}).  Differentiating this formula, we obtain from (\ref{debeka})
	\begin{equation}
		B_m^{(k)}(Y_n;x)-F^{(k)}(x)=B_m^{(k)}( Y_n-F;x)+\frac{(m)_k}{m^k}B_{m,k}(F^{(k)};x)-F^{(k)}(x),\label{decobd}
	\end{equation}
	whenever $F\in C^k[0,1]$, $k\in \mathbb{N}_0$, and $m>k$.
	
	To give confidence intervals or bands, we need to estimate the tail probabilities of $B_m^{(k)}( Y_n-F;x)$ in both (\ref{decobe}) and (\ref{decobd}). In the case $k=0$, we see from (\ref{rabern}) that this random part is given by 
	\begin{equation}
		B_m(Y_n-F:x)=\frac{1}{n}\widetilde{E}\left(S_n (F(\widetilde{S}_m(x)/m))-nF(\widetilde{S}_m(x)/m)\right).  \label{berand}
	\end{equation}	
	Suppose that $k\in \mathbb{N}$ and $m>k$.  To deal with the derivatives of $F$, we see from (\ref{pampiri}) that
	\[	\Delta^1_{1/m}Y_n(x)=\frac{1}{n}\sum_{r=1}^{n}1_{(x, x+1/m]}(X_r), \] 
	thus implying, by virtue of (\ref{eqfwdi}) and  (\ref{eqfwd2}), that
	\[	\Delta^k_{1/m}Y_n(x)=	\Delta^{k-1}_{1/m}\left(\Delta^1_{1/m}Y_n\right)(x)=\frac{1}{n}\sum_{j=0}^{k-1}{k-1 \choose j}(-1)^{k-1-j}\sum_{r=1}^{n}1_{\left(x+\frac{j}{m}, x+\frac{j+1}{m}\right]}(X_r). \]
	We thus have from (\ref{debedi}) that the estimator of $F^{(k)}(x)$ is given by
	\begin{align}
		&B_m^{(k)}(Y_n;x)=(m)_k\widetilde{E}\Delta^k_{1/m}Y_n\left(\frac{\widetilde{S}_{m-k}(x)}{m}\right)\nonumber\\&= \frac{(m)_k}{n}\sum_{l=0}^{m-k}p_{m-k,l}(x)\sum_{j=0}^{k-1}{k-1 \choose j}(-1)^{k-1-j}\sum_{r=1}^{n}1_{\left(\frac{l+j}{m}, \frac{l+j+1}{m}\right]}(X_r). \label{derbed}
	\end{align}
	We point out that, for $k=1$, this estimator was considered by many authors (see, for instance, Vitale \cite{viaber}, Babu et al. \cite{baappl}, Bouezmarni and Rollin \cite{borobe},  Ouimet \cite{ouassy}, and Wang and Lu \cite{waluap}, among others).  On the other hand, using representation (\ref{eqlaym}), the random term on the right-hand side in (\ref{decobd}) can be written as
	\begin{align}
		&	\frac{1}{(m)_k}B_m^{(k)}(Y_n-F;x)=\widetilde{E}\Delta^k_{1/m}\left(Y_n-F\right)\left(\frac{\widetilde{S}_{m-k}(x)}{m}\right)\nonumber\\&= \frac{1}{n}\sum_{j=0}^{k-1}{k-1 \choose j}(-1)^{k-1-j} \widetilde{E}\left(S_n\left(F\left(\frac{\widetilde{S}_{m-k}(x)+j+1}{m}\right)\right)-S_n\left(F\left(\frac{\widetilde{S}_{m-k}(x)+j}{m}\right)\right)\right.\nonumber\\&\phantom{\frac{1}{n}\sum_{j=0}^{k-1}{k-1 \choose j}{k-1 \choose j}(-1)^{k-1-j} }\left.-n\left(F\left(\frac{\widetilde{S}_{m-k}(x)+j+1}{m}\right)-F\left(\frac{\widetilde{S}_{m-k}(x)+j}{m}\right)\right)\right).\label{derapp}
	\end{align}
	
	To give confidence bands for $F^{(k)}$, we use the celebrated D-K-W inequality (cf. \cite{dvkias}) in the final form shown by Massart \cite{mathet}, which states that
	\begin{equation}	P\left(\left\|\frac{S_n(x)}{n}-x\right\|> \delta\right)\leq  2e^{-2n\delta^2}, \quad \delta>0. \label{expbmg}\end{equation}
	Our first main result is the following. 
	\cs{\begin{thm} Let $k\in \mathbb{N}_0$.  Assume that $F\in C^{k}[0,1]$.  Then,
		\begin{equation}
			P\left(\left\|B_m^{(k)}(Y_n;x)-F^{(k)}(x)\right\|> \frac{l_{k,\alpha}(m)}{\sqrt{n}}\right)  \leq \alpha, \label{eqbth1}
		\end{equation}
		where $m>\max (k,1)$ is the first positive integer such that 
		\begin{equation}
			\left\|\frac{(m)_k}{m^k} B_{m,k}(F^{(k)};x)-F^{(k)}(x)\right\|\leq \frac{1}{2}l_{k,\alpha}(m)\frac{1}{\sqrt{n}},\label{cowilk} 
		\end{equation}
		and
		\begin{equation}
			l_{k,\alpha}(m)=2^k(m)_k \sqrt{2 \log(2/\alpha)}.\label{dflkam}
		\end{equation}
		\label{thcoba}\end{thm}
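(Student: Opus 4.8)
The plan is to split $B_m^{(k)}(Y_n;x)-F^{(k)}(x)$ by means of the exact decomposition (\ref{decobd}), to control its deterministic part by the very condition (\ref{cowilk}) defining $m$, and to control its random part by the D-K-W inequality (\ref{expbmg}). Write $G_n(u)=S_n(u)/n$, $u\in[0,1]$, for the uniform empirical distribution function, and $\|G_n-\mathrm{id}\|=\sup_{u\in[0,1]}|G_n(u)-u|$, which is precisely the quantity bounded in (\ref{expbmg}). Taking supremum norms in (\ref{decobd}) and using the triangle inequality gives
\[
\left\|B_m^{(k)}(Y_n;x)-F^{(k)}(x)\right\|\leq \left\|B_m^{(k)}(Y_n-F;x)\right\|+\left\|\frac{(m)_k}{m^k}B_{m,k}(F^{(k)};x)-F^{(k)}(x)\right\|,
\]
and, by (\ref{cowilk}), the last term is at most $\tfrac{1}{2}l_{k,\alpha}(m)/\sqrt{n}$. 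Such an integer $m$ exists: combining $(m)_k/m^k\to1$ and the boundedness of $F^{(k)}$ with (\ref{beunbo}) of Theorem \ref{tecobe} (for $k=0$) and (\ref{thkab2}) of Theorem \ref{thkabo} applied to $F^{(k)}$ (for $k\geq1$), the left-hand side of (\ref{cowilk}) tends to $0$ as $m\to\infty$, whereas its right-hand side is positive. Hence the event in (\ref{eqbth1}) is contained in $\{\|B_m^{(k)}(Y_n-F;x)\|>\tfrac{1}{2}l_{k,\alpha}(m)/\sqrt{n}\}$, and it suffices to bound the probability of the latter.

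The key step is the pathwise estimate $\|B_m^{(k)}(Y_n-F;x)\|\leq 2^k(m)_k\,\|G_n-\mathrm{id}\|$. For $k=0$ this is immediate from (\ref{berand}): since $F$ maps $[0,1]$ into $[0,1]$, one has $|B_m(Y_n-F;x)|\leq\widetilde{E}\,|G_n(F(\widetilde{S}_m(x)/m))-F(\widetilde{S}_m(x)/m)|\leq\|G_n-\mathrm{id}\|$. For $k\geq1$ I would use the representation (\ref{derapp}): after dividing by $n$, each of its $k$ summands is the difference of two terms of the form $G_n(F(\cdot))-F(\cdot)$ evaluated at points of $[0,1]$, hence bounded in modulus by $2\|G_n-\mathrm{id}\|$; since $\sum_{j=0}^{k-1}\binom{k-1}{j}=2^{k-1}$, this gives $|B_m^{(k)}(Y_n-F;x)|\leq (m)_k\cdot 2^{k-1}\cdot 2\,\|G_n-\mathrm{id}\|$ for every $x$, which is the claim. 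Here one uses that the processes $\mathbb{Y}_n$ and $S_n(F(\cdot))/n$ have the same law, by (\ref{eqlaym}), so the tail probability under consideration is unchanged by working with this representation.

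Finally, set $\delta=\sqrt{\log(2/\alpha)/(2n)}$. A direct computation using (\ref{dflkam}) shows that $2^k(m)_k\,\delta=l_{k,\alpha}(m)/(2\sqrt{n})$, so the pathwise estimate together with the D-K-W inequality (\ref{expbmg}) yields
\[
P\left(\left\|B_m^{(k)}(Y_n-F;x)\right\|>\frac{l_{k,\alpha}(m)}{2\sqrt{n}}\right)\leq P\left(\|G_n-\mathrm{id}\|>\delta\right)\leq 2e^{-2n\delta^2}=2e^{-\log(2/\alpha)}=\alpha ,
\]
and combining this with the first paragraph gives (\ref{eqbth1}). I do not expect a genuine obstacle in this argument, as the essential work is already encoded in the decomposition (\ref{decobd}) and the representations (\ref{berand})--(\ref{derapp}); the points that need care are the equality-in-law reduction (\ref{eqlaym}) justifying the use of those representations, and the bookkeeping of the factor $2^k$ produced by the $k$-th forward difference in (\ref{debedi}) together with the falling factorial $(m)_k$, whose product is exactly matched by the definition (\ref{dflkam}) of $l_{k,\alpha}(m)$.
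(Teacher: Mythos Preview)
Your proof is correct and follows essentially the same route as the paper's own argument: the paper likewise derives the pathwise bound $\|B_m^{(k)}(Y_n-F;x)\|\leq 2^k(m)_k\|S_n(\cdot)/n-\cdot\|$ from (\ref{berand}) and (\ref{derapp}), combines it with the decomposition (\ref{decobd}) and condition (\ref{cowilk}), and concludes via the D-K-W inequality (\ref{expbmg}). Your version is slightly more detailed in that you justify the existence of the first admissible $m$ and make explicit the equality-in-law reduction through (\ref{eqlaym}), both of which the paper leaves implicit.
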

		
		We see from (\ref{eqbth1}) and (\ref{dflkam}) that the length of the confidence band increases as $k$ increases.  As a counterpart, the degree $m$ of the Bernstein estimator decreases as $k$ increases.  This will be clear after the comments following Corollaries \ref{coubof} and \ref{coubod} below.
	
Setting $k=0,1$ in Theorem \ref{thcoba}, we obtain confidence bands for $F$ and $F^{(1)}$, respectively.
\begin{cor}
	If $F\in C[0,1]$, then
	\begin{equation}
			P\left(\left\|B_m(Y_n;x)-F(x)\right\|> \frac{\sqrt{2 \log(2/\alpha)}}{\sqrt{n}}\right)  \leq \alpha, \label{eqbco1}\end{equation}
			where $m>1$ is the fist positive integer such that
					\begin{equation}
				\left\| B_{m}(F;x)-F(x)\right\|\leq \frac{\sqrt{2 \log(2/\alpha)}}{2\sqrt{n}}.\label{cowil0} 
			\end{equation}
\label{coubof}	
\end{cor}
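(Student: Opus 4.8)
The plan is to recognize first that Corollary \ref{coubof} is literally the case $k=0$ of Theorem \ref{thcoba}: when $k=0$ we have $(m)_0=1$, $B_{m,0}=B_m$, $F^{(0)}=F$, hence $l_{0,\alpha}(m)=\sqrt{2\log(2/\alpha)}$ by (\ref{dflkam}), and conditions (\ref{cowilk}) and (\ref{eqbth1}) collapse precisely to (\ref{cowil0}) and (\ref{eqbco1}). So if Theorem \ref{thcoba} is already available there is nothing to prove. Nevertheless I would also record the short self-contained argument, since it isolates exactly where the D-K-W inequality enters.

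Starting from the decomposition (\ref{decobe}), the triangle inequality for the supremum norm gives
\[
\|B_m(Y_n;x)-F(x)\|\leq \|B_m(Y_n-F;x)\|+\|B_m(F;x)-F(x)\|,
\]
so the deterministic term is $\leq \frac{\sqrt{2\log(2/\alpha)}}{2\sqrt n}$ by the hypothesis (\ref{cowil0}) on $m$, and it remains only to control the random term $\|B_m(Y_n-F;x)\|$. Here I would use that $B_m$ is a positive linear operator applied, conditionally on the sample $X_1,\dots,X_n$, to a function taking values in $[0,1]$ in its argument: since $\sum_{k}p_{m,k}(x)=1$, one gets the pathwise bound $\|B_m(Y_n-F;x)\|\leq \|Y_n-F\|$, where $\|Y_n-F\|=\sup_{x\in[0,1]}|Y_n(x)-F(x)|$.

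Next, by the representation (\ref{eqlaym}), $\|Y_n-F\|$ has the same law as $\sup_{x\in[0,1]}\big|S_n(F(x))/n-F(x)\big|$, which, because $\{F(x):x\in[0,1]\}\subseteq[0,1]$, is bounded by $\sup_{u\in[0,1]}|S_n(u)/n-u|$. Applying the D-K-W inequality (\ref{expbmg}) with $\delta=\frac{\sqrt{2\log(2/\alpha)}}{2\sqrt n}$ then yields
\[
P\!\left(\|B_m(Y_n-F;x)\|>\tfrac{\sqrt{2\log(2/\alpha)}}{2\sqrt n}\right)\leq 2e^{-2n\delta^2}=2e^{-\log(2/\alpha)}=\alpha,
\]
and on the complementary event the triangle inequality above gives $\|B_m(Y_n;x)-F(x)\|\leq \frac{\sqrt{2\log(2/\alpha)}}{2\sqrt n}+\frac{\sqrt{2\log(2/\alpha)}}{2\sqrt n}=\frac{\sqrt{2\log(2/\alpha)}}{\sqrt n}$, which is (\ref{eqbco1}). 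The only point requiring a little care — not really an obstacle — is the transition from $B_m(Y_n-F;x)$, which involves the expectation over the independent process $\widetilde{S}_m$ appearing in (\ref{berand}), to the uniform deviation $\|Y_n-F\|$; this is justified by the independence construction (\ref{varpro})--(\ref{indpro}) together with positivity of $B_m$, after conditioning on the sample.
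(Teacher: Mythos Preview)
Your proposal is correct and matches the paper's approach exactly: the paper deduces Corollary \ref{coubof} simply by setting $k=0$ in Theorem \ref{thcoba}, and your self-contained argument is nothing other than the specialization to $k=0$ of the paper's proof of Theorem \ref{thcoba} (the pathwise bound $\|B_m(Y_n-F;x)\|\leq \|S_n(\cdot)/n-\cdot\|$ is precisely (\ref{rtermk}) for $k=0$, followed by D-K-W with $\delta=\sqrt{2\log(2/\alpha)}/(2\sqrt{n})$).
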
 
Suppose that the Ditzian-Totik modulus of $F$ fulfills the Lipschitz condition
\[\omega_2^{\sigma}(F;h)\leq Ch^{\beta},\quad h\geq 0,\quad 0<\beta\leq 2,\]
for some constant $C>0$. By (\ref{beunbo}), condition (\ref{cowil0}) is satisfied if 
\[\frac{5C}{2}\frac{1}{m^{\beta/2}}\leq \frac{\sqrt{2 \log(2/\alpha)}}{2\sqrt{n}}.\]
In other words, the degree $m$ of the Bernstein polynomial has the order of magnitude $m\sim n^{1/\beta}$, thus heavily depending on the degree of smoothness of $F$.
\begin{cor}
	If $F\in C^1[0,1]$, then
	\begin{equation}
		P\left(\left\|B_m^{(1)}(Y_n;x)-F^{(1)}(x)\right\|> \frac{2m\sqrt{2 \log(2/\alpha)}}{\sqrt{n}}\right)  \leq \alpha, \label{eqbco2}\end{equation}
	where $m>1$ is the fist positive integer such that
	\begin{equation}
		\left\| B_{m,1}(F^{(1)};x)-F^{(1)}(x)\right\|\leq \frac{m\sqrt{2 \log(2/\alpha)}}{\sqrt{n}}.\label{cowil1} 
	\end{equation}
	\label{coubod}	
\end{cor}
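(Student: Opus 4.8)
The plan is to obtain this as a direct specialization of Theorem \ref{thcoba} with $k=1$, so the real work is only bookkeeping with the constants. First I would substitute $k=1$ into the definition (\ref{dflkam}), obtaining $l_{1,\alpha}(m)=2^1 (m)_1 \sqrt{2\log(2/\alpha)}=2m\sqrt{2\log(2/\alpha)}$, since $(m)_1=m$. Plugging this into the tail bound (\ref{eqbth1}) immediately yields the displayed inequality (\ref{eqbco2}). Next I would examine the threshold condition (\ref{cowilk}) for $k=1$: the left-hand side is $\|(m)_1 m^{-1} B_{m,1}(F^{(1)};x)-F^{(1)}(x)\|=\|B_{m,1}(F^{(1)};x)-F^{(1)}(x)\|$, using again $(m)_1/m=1$, while the right-hand side is $\tfrac12 l_{1,\alpha}(m)/\sqrt{n}=\tfrac12\cdot 2m\sqrt{2\log(2/\alpha)}/\sqrt{n}=m\sqrt{2\log(2/\alpha)}/\sqrt{n}$. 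This is exactly condition (\ref{cowil1}), so $m$ is the same integer in both statements.

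The only point requiring a word of care is the hypothesis $m>\max(k,1)$ in Theorem \ref{thcoba}: for $k=1$ this reads $m>\max(1,1)=1$, which matches the phrase ``the first positive integer $m>1$'' in the corollary, and in particular guarantees $m>k$ so that the Bernstein--Kantorovich operator $B_{m,1}$ and the identity (\ref{debeka}) are well defined. I would also note that the assumption $F\in C^1[0,1]$ is precisely the instance $k=1$ of the hypothesis $F\in C^k[0,1]$ needed for (\ref{decobd}) and hence for Theorem \ref{thcoba}.

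There is essentially no obstacle here: the statement is a corollary in the literal sense, and the proof is two or three lines of substituting $k=1$ and simplifying $(m)_1=m$. The most ``dangerous'' step is merely keeping track of the factor $2^k=2$ and the falling factorial so that the band width $2m\sqrt{2\log(2/\alpha)}/\sqrt{n}$ and the degree-selection inequality come out with matching constants; once those are checked against (\ref{eqbth1})--(\ref{dflkam}) the result follows. Accordingly, the proof I would write is simply: ``Take $k=1$ in Theorem \ref{thcoba} and use $(m)_1=m$.''
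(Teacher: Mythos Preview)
Your proposal is correct and matches the paper's approach exactly: the paper introduces Corollaries \ref{coubof} and \ref{coubod} with the single sentence ``Setting $k=0,1$ in Theorem \ref{thcoba}, we obtain confidence bands for $F$ and $F^{(1)}$, respectively,'' and gives no further argument. Your substitution $k=1$, $(m)_1=m$, $2^1=2$ is precisely what is intended.
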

In view of (\ref{thkab1}), suppose that \[	\left\| B_{m,1}(F^{(1)};x)-F^{(1)}(x)\right\|\leq \frac{C}{m^{\beta/2}},\quad 0<\beta\leq 2,\]
for some constant $C>0$. Then, condition (\ref{cowil1}) is satisfied if 
\[\frac{C}{m^{\beta/2}}\leq \frac{m\sqrt{2 \log(2/\alpha)}}{\sqrt{n}}.\]
This implies that the degree $m$ of the Bernstein polynomial has the order of magnitude $m\sim n^{1/(\beta+2)}$. However, the length $l$ of the confidence band has the order of magnitude $l\sim n^{-\beta/(2\beta+4)}$, as follows from (\ref{eqbco2}). As in Corollary \ref{coubof}, this length $l$ strongly depends on the degree of smoothness of $F^{(1)}$.

}

Finally, assume that $F(x)=x, \ x\in[0,1]$. Then, we have from (\ref{berpol}) and (\ref{debeka})
	\begin{equation}
		\left\|\frac{(m)_k}{m^k} B_{m,k}(F^{(k)};x)-F^{(k)}(x)\right\|=0,\quad k=0,1,\quad m\geq 2. \label{center}
	\end{equation}
	In other words, condition (\ref{cowilk}) in Theorem \ref{thcoba} holds for $m\geq 2$.  This leads us to state the following simple result for the uniform distribution.
	\begin{cor} If $F(x)=x, \ 0\leq x\leq 1$, then
		\begin{equation}P\left(\left\|B_2(Y_n;x)-x\right\|>\delta\right)\leq 2 e
			^{-8n\delta^2},\quad \delta>0.\label{unidis}\end{equation}
		and
		\begin{equation}P\left(\left\|B_2^{(1)}(Y_n;x)-1\right\|>\delta\right)\leq 2 e^{- n\delta^2/8},\quad \delta>0.\label{unider}\end{equation}
		\label{counba}	
	\end{cor}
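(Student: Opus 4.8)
The plan is to exploit the fact that, when $F(x)=x$, the degree-$2$ random Bernstein estimator and its derivative are \emph{explicit} polynomials in $x$ whose only randomness is the single variable $Y_n(1/2)$. First I would record two almost-sure facts. Since $F$ is the uniform distribution function on $[0,1]$, the sample lies in $(0,1)$ almost surely, so by (\ref{pampiri}) we have $Y_n(0)=0$ and $Y_n(1)=1$ a.s. Moreover, since $F(1/2)=1/2$, the variable $nY_n(1/2)$ is binomial with parameters $n$ and $1/2$; equivalently, by (\ref{eqlaym}), $Y_n(1/2)\overset{({\cal L})}{=}S_n(1/2)/n$. Substituting $Y_n(0)=0$ and $Y_n(1)=1$ into (\ref{berpol}) gives, almost surely,
\[
B_2(Y_n;x)=2Y_n(1/2)\,x(1-x)+x^2,\qquad B_2(Y_n;x)-x=\bigl(2Y_n(1/2)-1\bigr)\sigma^2(x).
\]
(One may instead obtain this from the decomposition (\ref{decobe}) together with the vanishing of the deterministic bias, i.e.\ the $k=0$ case of (\ref{center}).)

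For the band on $F$, since $\sup_{x\in[0,1]}\sigma^2(x)=1/4$ I would take the supremum over $x$ to get $\|B_2(Y_n;x)-x\|=\tfrac12\,|Y_n(1/2)-\tfrac12|$. Hence, using $Y_n(1/2)\overset{({\cal L})}{=}S_n(1/2)/n$,
\begin{align*}
P\bigl(\|B_2(Y_n;x)-x\|>\delta\bigr)&=P\bigl(|S_n(1/2)/n-\tfrac12|>2\delta\bigr)\\
&\le P\bigl(\|S_n(u)/n-u\|>2\delta\bigr)\le 2e^{-8n\delta^2},
\end{align*}
the last step being the D-K-W inequality (\ref{expbmg}) with $2\delta$ in place of $\delta$; this is (\ref{unidis}).

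For the band on $F^{(1)}$ I would differentiate the explicit formula in $x$:
\[
B_2^{(1)}(Y_n;x)=2x+2Y_n(1/2)(1-2x),\qquad B_2^{(1)}(Y_n;x)-1=\bigl(2Y_n(1/2)-1\bigr)(1-2x)
\]
(consistently with (\ref{decobd}) and the $k=1$ case of (\ref{center})). As $\sup_{x\in[0,1]}|1-2x|=1$, this gives $\|B_2^{(1)}(Y_n;x)-1\|=2\,|Y_n(1/2)-\tfrac12|$, and therefore
\begin{align*}
P\bigl(\|B_2^{(1)}(Y_n;x)-1\|>\delta\bigr)&=P\bigl(|S_n(1/2)/n-\tfrac12|>\delta/2\bigr)\\
&\le 2e^{-2n(\delta/2)^2}=2e^{-n\delta^2/2}\le 2e^{-n\delta^2/8},
\end{align*}
again by (\ref{expbmg}), applied at the single point $1/2$; this proves (\ref{unider}).

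I do not anticipate a genuine obstacle: the whole content is the collapse, for the uniform law, of $B_2(Y_n;\cdot)$ and $B_2^{(1)}(Y_n;\cdot)$ to an affine (resp.\ quadratic) function of the binomial statistic $nY_n(1/2)$, after which both bounds are immediate from Massart's form of the D-K-W inequality — equivalently, from Theorem \ref{thcoba} with $m=2$, admissible here by (\ref{center}). The only points requiring care are the almost-sure boundary values of $Y_n$, the identification of the law of $nY_n(1/2)$, and the elementary maxima of $x(1-x)$ and $|1-2x|$ on $[0,1]$; I note in passing that this argument in fact yields the sharper exponent $n\delta^2/2$ in (\ref{unider}).
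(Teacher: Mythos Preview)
Your argument is correct. For (\ref{unidis}) it is essentially the paper's proof rephrased: the paper also reduces to the single binomial statistic at $1/2$, writing $B_2(Y_n-F;x)$ via (\ref{berand}), observing it vanishes when $\widetilde S_2(x)\in\{0,2\}$, and bounding by $\tfrac12\|S_n(x)/n-x\|$ because $\sup_x\widetilde P(\widetilde S_2(x)=1)=1/2$; this is exactly your computation $\|B_2(Y_n;x)-x\|=\tfrac12|Y_n(1/2)-\tfrac12|$ seen from the other side.

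For (\ref{unider}) your route is genuinely different and sharper. The paper does \emph{not} compute $B_2^{(1)}(Y_n;x)$ explicitly; it invokes the general bound (\ref{rtermk}), which for $k=1$, $m=2$ gives $\|B_2^{(1)}(Y_n-F;x)\|\le 4\|S_n(x)/n-x\|$ and hence exponent $n\delta^2/8$. Your explicit formula $B_2^{(1)}(Y_n;x)-1=(2Y_n(1/2)-1)(1-2x)$ yields the exact sup-norm $2|Y_n(1/2)-\tfrac12|$, a factor~$2$ better, and thus the tighter bound $2e^{-n\delta^2/2}$ that you note. The trade-off is clear: the paper's argument illustrates how (\ref{unider}) drops out of the general machinery of Theorem~\ref{thcoba}, while your direct computation is specific to $m=2$ and the uniform law but gives the optimal constant for this estimator. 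Both are valid proofs of the stated corollary.
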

	
	By (\ref{eqlaym}) and (\ref{rabern}), the estimator
	\[B_2(Y_n;x)=2\sigma^2(x)Y_n(1/2)+x^2=2\sigma^2(x)\frac{S_n(1/2)}{n}+x^2\]
	is a random polynomial of degree $2$ which only involves the random variable $S_n(1/2)$ having the binomial distribution with parameters $n$ and $1/2$.  To estimate the uniform distribution, this estimator is much simpler and much more efficient than the uniform empirical process used in the D-K-W inequality (compare inequalities (\ref{expbmg}) and  (\ref{unidis})).  In addition, the  
	MSE and MISE of the estimator $B_2(Y_n;x)$ are easily computed, since
	\[MSE= 4\sigma^4(x) Var(Y_n(1/2))=\frac{\sigma^4(x)}{n}, \quad MISE=\frac{1}{30n}. \]
	
	\section{Confidence intervals for $F$} \label{secoin}
	In this section, we give confidence intervals for $F(x)$, for any $x\in (0,1)$.  For the sake of simplicity, we assume from now on that $0<F(x)	<1, \ x\in(0,1)$. 
	
	To this end, we consider the nonnegative, increasing, and convex function
	\begin{equation}
		\tau(\epsilon)=(1+\epsilon )\log(1+\epsilon)-\epsilon, \ \epsilon\geq 0; \quad \tau(\epsilon)\geq \frac{\epsilon^2}{2}-\frac{\epsilon^3}{6}\geq \frac{\epsilon^2}{3},\quad 0\leq \epsilon\leq 1.  \label{dftaue}
	\end{equation} 	
		We state the basic result of this section.
	\cs{\begin{thm} Let $x\in (0,1)$ and $\epsilon>0$. Then,
		\begin{align}
			&	P\left(\left|B_m(Y_n;x)-F(x)\right|\geq \epsilon B_m(\sigma^2(F);x)+ \left|B_m(F;x)-F(x)\right| \right) \nonumber \\&\leq 2 exp\left(-\frac{n }{q_m(x)}B_m\left(\sigma^2 (F);x\right)\tau(\epsilon)\right), \label{eqbth2}	\end{align} 
				where
\begin{equation}q_m(x)=1-x^m-(1-x)^m.\label{dfqmx}		
	\end{equation}\label{thcob0}\end{thm}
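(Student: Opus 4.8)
The plan is to split $B_m(Y_n;x)-F(x)$ into a deterministic bias and a random fluctuation via the decomposition (\ref{decobe}), to represent the fluctuation as a normalized sum of $n$ i.i.d.\ centered bounded random variables, to estimate their common range and variance, and to conclude with the Bennett-type concentration inequality of Lemma \ref{lecowt}. Since the term $B_m(F;x)-F(x)$ in (\ref{decobe}) is deterministic, the (reverse) triangle inequality shows that the event in (\ref{eqbth2}) is contained in $\{\,|B_m(Y_n-F;x)|\ge \epsilon B_m(\sigma^2(F);x)\,\}$, so it suffices to bound the probability of the latter. Evaluating the $\widetilde{\mathbb{E}}$-expectation in (\ref{berand}) over the values of $\widetilde{S}_m(x)$ and recalling (\ref{empiri}), one gets
\begin{equation*}
B_m(Y_n-F;x)=\frac{1}{n}\sum_{r=1}^{n}\xi_r,\qquad \xi_r=\sum_{k=0}^{m}p_{m,k}(x)\bigl(1_{[0,F(k/m)]}(U_r)-F(k/m)\bigr),
\end{equation*}
where $U_1,\dots,U_n$ are the i.i.d.\ uniform variables of (\ref{empiri}); the $\xi_r$ are i.i.d.\ and, because $\mathbb{E}\,1_{[0,F(k/m)]}(U_r)=F(k/m)$, centered.

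The crux is that $F\in C[0,1]$ together with $X\in[0,1]$ forces $F(0)=0$ and $F(1)=1$, so the $k=0$ and $k=m$ summands of $\xi_r$ vanish identically, giving
\begin{equation*}
\xi_r=\sum_{k=1}^{m-1}p_{m,k}(x)\bigl(\beta_k-F(k/m)\bigr),\qquad \beta_k:=1_{[0,F(k/m)]}(U_r).
\end{equation*}
Since $\sum_{k=1}^{m-1}p_{m,k}(x)=q_m(x)$ by (\ref{dfqmx}), while $0\le\beta_k\le1$ and $0\le F(k/m)\le1$, we obtain $|\xi_r|\le q_m(x)$ almost surely. For the variance, I would use $\mathrm{Cov}(\beta_k,\beta_l)\le\sqrt{\mathrm{Var}(\beta_k)\,\mathrm{Var}(\beta_l)}$ with $\mathrm{Var}(\beta_k)=F(k/m)(1-F(k/m))$, followed by the Cauchy--Schwarz inequality with weights $p_{m,k}(x)$, to get
\begin{equation*}
\mathrm{Var}(\xi_r)\le\Bigl(\sum_{k=1}^{m-1}p_{m,k}(x)\sqrt{F(k/m)(1-F(k/m))}\Bigr)^{2}\le q_m(x)\sum_{k=1}^{m-1}p_{m,k}(x)F(k/m)(1-F(k/m)).
\end{equation*}
As $F(0)(1-F(0))=F(1)(1-F(1))=0$, the last sum equals $B_m(\sigma^2(F);x)$, whence $\mathrm{Var}(\xi_r)\le q_m(x)B_m(\sigma^2(F);x)$.

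Finally, I would invoke Lemma \ref{lecowt} for $\sum_{r=1}^{n}\xi_r$ and for $\sum_{r=1}^{n}(-\xi_r)$ with range bound $q_m(x)$, variance bound $q_m(x)B_m(\sigma^2(F);x)$, and level $t=\epsilon\,n\,B_m(\sigma^2(F);x)$; with these choices the argument of $\tau$ in (\ref{dftaue}) collapses to $\epsilon$ and the prefactor to $nB_m(\sigma^2(F);x)/q_m(x)$, so each one-sided tail is at most $\exp\bigl(-\tfrac{n}{q_m(x)}B_m(\sigma^2(F);x)\,\tau(\epsilon)\bigr)$. A union bound yields the factor $2$, and combining this with the reduction of the first paragraph proves (\ref{eqbth2}). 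I expect the main obstacle to be the middle paragraph: recognizing that continuity pins $F(0)=0$ and $F(1)=1$, which at one stroke removes the endpoint summands, puts the factor $q_m(x)$ into the range bound, and, after a double application of Cauchy--Schwarz, into the variance bound as well; without this one is left only with the weaker variance proxy $B_m(\sigma^2(F);x)$ and the constant $1$ in place of $q_m(x)$.
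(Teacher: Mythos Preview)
Your proof is correct and reaches the same bound as the paper, but the route to the variance proxy differs slightly and is worth comparing. The paper applies Lemma~\ref{lecowt} with the random index $\widetilde{T}$ taking values $F(i/m)$ with probabilities $p_{m,i}(x)/q_m(x)$, $i=1,\dots,m-1$, and with the process $Z(y)=S_1(y)-y$, so that $d=1$ and $B_m(Y_n-F;x)=\tfrac{q_m(x)}{n}\widetilde{\mathbb{E}}W_n(\widetilde{T})$. The key point is that the generalized H\"older inequality built into Lemma~\ref{lecowt} gives the variance quantity $\mathbb{E}\times\widetilde{\mathbb{E}}\,Z(\widetilde{T})^2=\widetilde{\mathbb{E}}\,\sigma^2(\widetilde{T})=B_m(\sigma^2(F);x)/q_m(x)$ \emph{directly from Fubini}, with no Cauchy--Schwarz needed. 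You instead collapse everything into a single i.i.d.\ sequence $(\xi_r)$ with range $d=q_m(x)$ and must work for the variance bound: two passes of Cauchy--Schwarz (on covariances, then on the weighted sum) are required to get $\mathrm{Var}(\xi_r)\le q_m(x)\,B_m(\sigma^2(F);x)$. Both choices of $(d,c)$ feed into the Bennett inequality identically, so the final exponent is the same; the paper's use of the non-degenerate index is just a cleaner way to let the lemma absorb the Cauchy--Schwarz step. One small note: Lemma~\ref{lecowt} is already two-sided (absolute value in the statement), so there is no need to apply it separately to $\sum_r\xi_r$ and $\sum_r(-\xi_r)$.
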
}

As a first application, we consider the case of the uniform distribution, which has interest in itself.
\begin{cor} Let $F(x)=x, \ 0\leq x\leq 1$, and $\delta>0$. Then,
	\begin{equation}
		P\left(\left|B_2(Y_n;x)-x\right|\geq 4\sigma^2(x)\delta\right)\leq 2 e^{-n\tau(8\delta)/4 }\leq 2 exp\left(-8n\delta^2\left(1-\frac{8}{3}\delta\right)\right) \label{counu1},
	\end{equation}
	where the last inequality holds whenever $\delta\leq 1/8$. \label{counup}
\end{cor}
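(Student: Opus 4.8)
The plan is to specialize Theorem~\ref{thcob0} to $m=2$ and to $F$ equal to the identity on $[0,1]$, and then to simplify the resulting bound. First I would evaluate the three quantities entering the right-hand side of (\ref{eqbth2}). Since the Bernstein polynomials reproduce affine functions (equivalently, $\widetilde{E}\,\widetilde{S}_m(x)/m=x$), we have $B_m(F;x)=x=F(x)$, so the bias term $|B_2(F;x)-F(x)|$ vanishes. Since $\sigma^2(F(\theta))=\sigma^2(\theta)=\theta-\theta^2$, and since $B_m(\theta;x)=x$ together with the classical second-moment identity $B_m(\theta^2;x)=x^2+\sigma^2(x)/m$ (both immediate from (\ref{berpol}) and the fact that $\widetilde{S}_m(x)$ is binomial with parameters $m,x$), one gets
\[
B_m(\sigma^2(F);x)=x-x^2-\frac{\sigma^2(x)}{m}=\sigma^2(x)\Bigl(1-\frac1m\Bigr),
\qquad\text{so}\qquad B_2(\sigma^2(F);x)=\frac{\sigma^2(x)}{2}.
\]
Finally, from the definition (\ref{dfqmx}), $q_2(x)=1-x^2-(1-x)^2=2x(1-x)=2\sigma^2(x)$.

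Next I would substitute these values into (\ref{eqbth2}). The threshold becomes $\epsilon B_2(\sigma^2(F);x)=\tfrac{\epsilon}{2}\sigma^2(x)$, while the exponent simplifies, the factors $\sigma^2(x)$ cancelling:
\[
-\frac{n}{q_2(x)}\,B_2(\sigma^2(F);x)\,\tau(\epsilon)
=-\frac{n}{2\sigma^2(x)}\cdot\frac{\sigma^2(x)}{2}\,\tau(\epsilon)
=-\frac{n}{4}\,\tau(\epsilon).
\]
Choosing $\epsilon=8\delta$, so that $\tfrac{\epsilon}{2}\sigma^2(x)=4\sigma^2(x)\delta$ matches exactly the event appearing in (\ref{counu1}), Theorem~\ref{thcob0} yields at once
\[
P\left(\left|B_2(Y_n;x)-x\right|\ge 4\sigma^2(x)\delta\right)\le 2e^{-n\tau(8\delta)/4},
\]
which is the first claimed inequality.

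For the second inequality I would invoke the elementary lower estimate $\tau(\epsilon)\ge \epsilon^2/2-\epsilon^3/6$ recorded in (\ref{dftaue}), valid for $0\le\epsilon\le 1$. The hypothesis $\delta\le 1/8$ is precisely what guarantees $\epsilon=8\delta\le 1$, so that
\[
\tau(8\delta)\ge \frac{(8\delta)^2}{2}-\frac{(8\delta)^3}{6}=32\delta^2-\frac{256}{3}\delta^3=32\delta^2\Bigl(1-\frac{8}{3}\delta\Bigr);
\]
multiplying by $n/4$ gives $\tfrac{n}{4}\tau(8\delta)\ge 8n\delta^2(1-\tfrac{8}{3}\delta)$, and the monotonicity of $t\mapsto e^{-t}$ turns the first bound into $2\exp\bigl(-8n\delta^2(1-\tfrac{8}{3}\delta)\bigr)$, as required.

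Since every step is either a direct specialization of Theorem~\ref{thcob0} or an inequality already stated in the paper, I do not anticipate a genuine obstacle; the only points requiring care are the bookkeeping of the constants (the choices $m=2$ and $\epsilon=8\delta$, and the identity $q_2(x)=2\sigma^2(x)$, which is what makes the exponent independent of $x$) and the remark that the assumption $\delta\le 1/8$ is exactly the range in which the quadratic–cubic lower bound for $\tau$ in (\ref{dftaue}) is available.
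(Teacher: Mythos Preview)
Your proof is correct and follows essentially the same route as the paper: specialize Theorem~\ref{thcob0} with $m=2$ and $F(x)=x$, use $B_2(F;x)=x$, $B_2(\sigma^2(F);x)=\sigma^2(x)/2$, and $q_2(x)=2\sigma^2(x)$, then set $\epsilon=8\delta$ and invoke the lower bound on $\tau$ from (\ref{dftaue}). The only difference is that you spell out the arithmetic in more detail than the paper does.
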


	Observe that the upper bound in (\ref{counu1}) is uniform in $x\in (0,1)$.  This means that the same confidence level works for any $x\in(0,1)$.  On the other hand, the length of the confidence interval in (\ref{counu1}) is shorter than that of the confidence band in Corollary \ref{counba}, except for $x=1/2$, case in which both lengths are the same.  In fact, near the endpoints of $[0,1]$, the length of the confidence interval is very short.
	
	\cs{Let
		\begin{equation}l_\alpha=\sqrt{3\log(2e^{1/3}/\alpha)}. \label{dflalph}\end{equation}
	For a general distribution function $F$, we give the following result.
	\begin{cor}
		Let $x\in(0,1)$.  Assume that $n\geq l_\alpha^2/\sigma^2(F(x))$. Then,
		\begin{equation}
			P\left(\left|B_m(Y_n;x)-F(x)\right|\geq \left(2 l_\alpha+\frac{1}{l_\alpha} \right)\frac{ \sigma(F(x))}{\sqrt{n}} \right) \leq\alpha, \label{intek0}
		\end{equation}
		where $m>1$ is the first positive integer such that
		\begin{equation}
			\left|B_m(\sigma^2(F);x)-\sigma^2(F(x))\right|\leq\frac{\sigma^2(F(x))}{l_\alpha^2}, \label{ink0c1}
		\end{equation}
		and 
		\begin{equation}
			\left|B_m(F;x)-F(x)\right|\leq \frac{\sigma\left(F(x)\right)l_\alpha}{\sqrt{n}}. \label{ink0c2}
		\end{equation} \label{coink0}
	\end{cor}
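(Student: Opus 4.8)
The plan is to apply Theorem~\ref{thcob0} with the single well-chosen value
$$\epsilon=\frac{l_\alpha}{\sigma(F(x))\sqrt{n}},$$
and then to squeeze from both sides: bound the radius $\epsilon B_m(\sigma^2(F);x)+|B_m(F;x)-F(x)|$ appearing on the left of (\ref{eqbth2}) above by $(2l_\alpha+1/l_\alpha)\sigma(F(x))/\sqrt n$, and bound the exponent on the right of (\ref{eqbth2}) below by $\log(2/\alpha)$. Throughout I would write $\sigma^2=\sigma^2(F(x))=F(x)(1-F(x))$, which is strictly positive by the standing assumption $0<F(x)<1$, and $b_m=B_m(\sigma^2(F);x)$; I would also record, straight from (\ref{dflalph}), the identity $l_\alpha^2=3\log(2e^{1/3}/\alpha)=1+3\log(2/\alpha)$, so in particular $l_\alpha^2>1$. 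First I would note that the integer $m$ is well defined: since $F$ and $\sigma^2\circ F$ lie in $C[0,1]$, one has $b_m\to\sigma^2$ and $B_m(F;x)\to F(x)$ as $m\to\infty$ (the convergence $B_m(g;x)\to g(x)$ underlying Theorem~\ref{tecobe}), while the right-hand sides of (\ref{ink0c1}) and (\ref{ink0c2}) are fixed positive numbers, so both inequalities hold for all large $m$.

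For the radius, I would first rewrite (\ref{ink0c1}) as the two-sided estimate $\sigma^2(1-1/l_\alpha^2)\le b_m\le\sigma^2(1+1/l_\alpha^2)$. Using the upper half, the value of $\epsilon$, and then (\ref{ink0c2}),
$$\epsilon b_m+\bigl|B_m(F;x)-F(x)\bigr|\le\frac{l_\alpha}{\sigma\sqrt n}\,\sigma^2\Bigl(1+\frac1{l_\alpha^2}\Bigr)+\frac{\sigma l_\alpha}{\sqrt n}=\Bigl(2l_\alpha+\frac1{l_\alpha}\Bigr)\frac{\sigma}{\sqrt n}.$$
Hence the event in (\ref{intek0}) is contained in the event in (\ref{eqbth2}) for this $\epsilon$, so it remains to bound the corresponding right-hand side of (\ref{eqbth2}) by $\alpha$.

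For the exponent I would use that $q_m(x)=1-x^m-(1-x)^m\in(0,1)$ when $m>1$, $x\in(0,1)$, whence $\tfrac{n}{q_m(x)}b_m\tau(\epsilon)\ge nb_m\tau(\epsilon)$. The hypothesis $n\ge l_\alpha^2/\sigma^2(F(x))$ is precisely the statement $\epsilon\le1$, so the estimate $\tau(\epsilon)\ge\epsilon^2/3$ from (\ref{dftaue}) applies; combining it with the lower half of the estimate on $b_m$,
$$nb_m\tau(\epsilon)\ge\frac{nb_m\epsilon^2}{3}=\frac{b_m l_\alpha^2}{3\sigma^2}\ge\frac{\sigma^2(1-1/l_\alpha^2)l_\alpha^2}{3\sigma^2}=\frac{l_\alpha^2-1}{3}=\log\frac2\alpha .$$
Therefore the right-hand side of (\ref{eqbth2}) is at most $2e^{-\log(2/\alpha)}=\alpha$, which gives (\ref{intek0}).

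The computations are elementary; the one point needing care is the interplay forced by the range $0\le\epsilon\le1$ in (\ref{dftaue}). It is exactly the condition $n\ge l_\alpha^2/\sigma^2(F(x))$ that makes the quadratic lower bound $\tau(\epsilon)\ge\epsilon^2/3$ available, and it is the slack $1/l_\alpha^2$ permitted in (\ref{ink0c1}), together with that quadratic bound, that produces the precise constants $2l_\alpha+1/l_\alpha$ in the radius and $l_\alpha^2-1=3\log(2/\alpha)$ in the exponent. Everything else is bookkeeping with the definition of $l_\alpha$.
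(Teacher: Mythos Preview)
Your proof is correct and follows essentially the same route as the paper's own argument: apply Theorem~\ref{thcob0} with $\epsilon=l_\alpha/(\sigma(F(x))\sqrt{n})$, use (\ref{ink0c1})--(\ref{ink0c2}) to bound the radius, and use $q_m(x)\le1$ together with $\tau(\epsilon)\ge\epsilon^2/3$ and the lower half of (\ref{ink0c1}) to control the exponent. Your rewriting of (\ref{ink0c1}) as a two-sided bound and your explicit verification that the integer $m$ is well defined are minor presentational differences, not substantive ones.
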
}
	
\cs{Comparing this result with Corollary \ref{coubof}, we see that the length of the confidence intervals and bands for $F$ have the same order of magnitude $1/\sqrt{n}$.  On the other hand, assume that $F$ and $\sigma^2(F)$ fulfil the Lipschitz condition
	\begin{equation}
		\max\left(\omega_2(F;h),\omega_2(\sigma^2\left(F\right); h) \right)\leq C h^{\beta}, \quad 0<\beta \leq 2,\label{comaxFs}
	\end{equation}
	for some constant $C>0$. By (\ref{beupbo}), conditions (\ref{ink0c1}) and (\ref{ink0c2}) are satisfied if 
	\begin{equation}
		\frac{3}{2}C\frac{\sigma^{\beta}(x)}{m^{\beta/2}}\leq \frac{\sigma^2(F(x))}{l_\alpha^2}\quad \hbox{and}\quad 	\frac{3}{2}C\frac{\sigma^{\beta}(x)}{m^{\beta/2}}\leq  \frac{\sigma\left(F(x)\right)l_\alpha}{\sqrt{n}}, \label{verink}
	\end{equation}
	respectively.  This means that the order of magnitude of the degree $m$ of the Bernstein polynomial is $m\sim n^{1/\beta}$, again heavily depending on the degree of smoothness of $F$.  In this respect, suppose that $F(x)=c$, $x\in (a,b)$, $0<a<b<1$.  For $x\in (a,b)$, Theorem (\ref{thnull}) implies that condition (\ref{ink0c2}) is satisfied if 
	
	\[\|F- c\|\left(e^{-mr(x,a)}+e^{-mr(x,b)}\right)\leq \frac{\sigma\left(F(c)\right)l_\alpha}{\sqrt{n}},\]
	and therefore, the order of magnitude of $m$ is $m\sim\log n$. Similar considerations can be made with regard to condition (\ref{ink0c1}).
	
	No proper comparisons between expressions (\ref{intek0}) and (\ref{assint}) can be made, because the confidence interval in (\ref{intek0}) is explicit, whereas that in (\ref{assint}) is asymptotic. This notwithstanding, the length of the confidence interval in (\ref{intek0}) is a little worse than that in (\ref{assint}).  For instance, if $\alpha=0.05$, then
	\[2 l_\alpha+\frac{1}{l_\alpha}=7.2353,\quad z_{\alpha/2}=1.96, \]
	as follows from (\ref{dflalph}).  However, in both cases, the lengths of the confidence intervals have the same order of magnitude. On the other hand, formula  (\ref{intek0}) holds for any $F\in C[0,1]$, whereas (\ref{assint}) only holds for $F\in C^2[0,1]$.  In this last case, the size $m$ of the Bernstein polynomial has order of magnitude $n^{2/3}$ in (\ref{assint}), and $n^{1/2}$ in  (\ref{intek0}), as follows from  (\ref{comaxFs}) and  (\ref{verink}).}

	\section{Confidence intervals for $F^{(k)}$} \label{secoid}
	
In this section, we fix $k\in \mathbb{N}$ and assume that $m>k$.  We suppose that $F\in C^{k}[0,1]$ and denote by $\rho(x)=F^{(1)}(x)$. We also assume that $\rho(x)>0,\ x\in (0,1)$.  	

\cs{Denote by $\lfloor y \rfloor$ the integer part of $y\in \mathbb{R}$. Let
		\begin{equation}
			r_k(m)=\frac{md_k}{(m)_k {2(k-1) \choose k-1} },\label{dfrkm}
		\end{equation}
		where	
				\begin{equation}
d_k= {k-1 \choose \lfloor k/2\rfloor}. 	\label{leprz1}
		\end{equation}
		On the other hand, consider the following particular case of the positive linear operator defined in (\ref{dfkant})
		\begin{equation}
			L_{m,k}(\rho;x)=\widetilde{E}\rho\left(\frac{\widetilde{S}_{m-k}(x)+\widetilde{R}_{k-1}+\widetilde{V}}{m}\right),\label{dfkan2}
		\end{equation}
		where $\widetilde{V}$ is uniformly distributed on $[0,1]$ and 
				\begin{equation}
			\widetilde{P}\left(\widetilde{R}_{k-1}=j\right)=\frac{\displaystyle{k-1 \choose j}^2}{\displaystyle{2(k-1) \choose k-1}},\quad j=0,1,\dots, k-1. \label{leprz3}
		\end{equation} 
		The following result is analogous to Theorem \ref{thcob0}.
			\begin{thm} Let $x\in(0,1)$ and $\delta>0$.  Then,
			\begin{eqnarray}
			&&	P\left(\left|B_m^{(k)}(Y_n;x)-F^{(k)}(x)\right|\geq \delta L_{m,k}(\rho, x)+\left|\frac{(m)_k}{m^k}B_{m,k}(F^{(k)};x)-F^{(k)}(x)\right| \right)\nonumber \\ &&\leq 2 exp \left(-\frac{n}{(m)_kd_k r_k(m)}L_{m,k}(\rho;x)\tau(r_k(m)\delta)\right).\label{thdere}
			\end{eqnarray}
		 \label{thder}
			\end{thm}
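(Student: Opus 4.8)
The plan is to mirror the proof of Theorem \ref{thcob0}, with the quantities $(m)_kd_k$ and $L_{m,k}(\rho;x)$ now playing the roles that $q_m(x)$ and $B_m(\sigma^2(F);x)$ play there. First I would invoke the decomposition (\ref{decobd}): the term $\frac{(m)_k}{m^k}B_{m,k}(F^{(k)};x)-F^{(k)}(x)$ is deterministic, so by the triangle inequality the probability in (\ref{thdere}) is at most $P\big(|B_m^{(k)}(Y_n-F;x)|\ge \delta\, L_{m,k}(\rho;x)\big)$, and it remains to bound this tail of the random part. Using the representation (\ref{derbed})--(\ref{derapp}) of $B_m^{(k)}(Y_n-F;x)$ (together with the quantile identity (\ref{eqlaym}) if one prefers to work with $1_{[0,\cdot\,]}(U_r)$), I would write
\[
B_m^{(k)}(Y_n-F;x)=\frac1n\sum_{r=1}^{n}\xi_r,\qquad
\xi_r=(m)_k\,\widetilde{\mathbb E}\,\Delta^k_{1/m}\big(1_{(-\infty,\cdot\,]}(X_r)-F\big)\!\left(\tfrac{\widetilde S_{m-k}(x)}{m}\right),
\]
where $\xi_1,\dots,\xi_n$ are i.i.d.\ and centered (after $\widetilde{\mathbb E}$ integrates out $\widetilde S_{m-k}(x)$, each $\xi_r$ is a bounded function of $X_r$ alone). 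By (\ref{eqfwdi})--(\ref{eqfwd2}), $\xi_r/(m)_k$ is the centered version of a mixture, over $\widetilde S_{m-k}(x)$, of a signed combination with coefficients $(-1)^{k-1-j}\binom{k-1}{j}$ of the indicators of the $k$ pairwise disjoint cells $\big((\widetilde S_{m-k}(x)+j)/m,\,(\widetilde S_{m-k}(x)+j+1)/m\,\big]$, $j=0,\dots,k-1$.

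The estimate then rests on two ingredients. \emph{(i) Amplitude.} Since $d_k=\binom{k-1}{\lfloor k/2\rfloor}$ is the largest of the $\binom{k-1}{j}$ (cf.\ (\ref{leprz1})) and the $k$ cells are disjoint, so at most one indicator is ever active, the scale (range) parameter controlling the summands $\xi_r$ in the concentration inequality of Lemma \ref{lecowt} is $(m)_kd_k$. \emph{(ii) Variance.} Conditioning on $\widetilde S_{m-k}(x)$ and using disjointness again, the conditional second moment of the signed indicator combination equals $\sum_{j=0}^{k-1}\binom{k-1}{j}^2p_j$ with $p_j=F\big((\widetilde S_{m-k}(x)+j+1)/m\big)-F\big((\widetilde S_{m-k}(x)+j)/m\big)$; writing $p_j=\frac1m\widetilde{\mathbb E}\,\rho\big((\widetilde S_{m-k}(x)+j+\widetilde V)/m\big)$ with $\widetilde V$ uniform on $[0,1]$, and using Vandermonde's identity $\sum_j\binom{k-1}{j}^2=\binom{2(k-1)}{k-1}$ to recognize $\binom{k-1}{j}^2/\binom{2(k-1)}{k-1}$ as the law (\ref{leprz3}) of $\widetilde R_{k-1}$, one gets (after taking the outer expectation out of the square by convexity)
\[
\mathrm{Var}(\xi_r)\le (m)_k^2\,\widetilde{\mathbb E}\Big[\textstyle\sum_{j}\binom{k-1}{j}^2p_j\Big]
=\frac{(m)_k^2\binom{2(k-1)}{k-1}}{m}\,L_{m,k}(\rho;x),
\]
with $L_{m,k}(\rho;x)$ precisely the operator defined in (\ref{dfkan2}).

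Finally I would feed amplitude $(m)_kd_k$, variance bound $(m)_k^2\binom{2(k-1)}{k-1}L_{m,k}(\rho;x)/m$ and deviation level $\delta\,L_{m,k}(\rho;x)$ into the Bennett-type inequality of Lemma \ref{lecowt}. The argument of $\tau$ it returns is $\frac{(m)_kd_k\cdot\delta\,L_{m,k}(\rho;x)}{(m)_k^2\binom{2(k-1)}{k-1}L_{m,k}(\rho;x)/m}=\frac{m d_k}{(m)_k\binom{2(k-1)}{k-1}}\,\delta=r_k(m)\,\delta$, which is exactly why $r_k(m)$ is defined by (\ref{dfrkm}), while the prefactor of $\tau$ in the exponent cancels down to $n L_{m,k}(\rho;x)/\big((m)_kd_k\,r_k(m)\big)$; combining the upper and lower deviations produces the leading factor $2$, and this is the bound (\ref{thdere}). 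The main obstacle is ingredient (ii): verifying that the averaged conditional variance is \emph{exactly} $\frac{(m)_k^2\binom{2(k-1)}{k-1}}{m}L_{m,k}(\rho;x)$ — i.e.\ that the squared-binomial weights of $\widetilde R_{k-1}$ and the integral representation of $\rho$ are precisely what turn a routine variance bound into the operator (\ref{dfkan2}) — and, relatedly, keeping the constants $d_k$, $(m)_k$ and $r_k(m)$ straight so that the Bennett exponent collapses to the stated closed form.
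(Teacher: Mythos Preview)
Your proposal is correct and follows essentially the same route as the paper: reduce via (\ref{decobd}) to the random part, write it as $(m)_k/n$ times $\widetilde{\mathbb E}W_n(\widetilde T)$ with $\widetilde T=\widetilde S_{m-k}(x)/m$, and apply Lemma~\ref{lecowt} with amplitude $d=d_k$ and variance proxy $c=\tfrac{1}{(m)_kd_kr_k(m)}L_{m,k}(\rho;x)$, the latter coming from exactly the computation you sketch (disjoint cells, squared-binomial weights recognized as the law (\ref{leprz3}) of $\widetilde R_{k-1}$, and $p_j=\tfrac1m\widetilde{\mathbb E}\rho(\cdot)$), which the paper packages as Lemma~\ref{leprzx}. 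The only cosmetic difference is that you phrase things as an i.i.d.\ sum $\tfrac1n\sum_r\xi_r$ and pull $\widetilde{\mathbb E}$ out of the square by Jensen, whereas the paper keeps the $\widetilde{\mathbb E}W_n(\widetilde T)$ notation and lets the H\"older step inside Lemma~\ref{lecowt} do that work; the resulting exponent is identical.
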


The following result is the main result of this section. 
	\begin{thm} Let $x\in (0,1)$.  Set
		\begin{equation}
			s_k(m)=\frac{(m)_k}{\sqrt{m}}\sqrt{ {2(k-1) \choose k-1}}.	\label{dfskm}
		\end{equation}
		\label{thboud}
		Assume that 
		\begin{equation}
		\frac{m}{n}\leq \frac{\rho(x)}{ l_\alpha^2d_k^2 }{2(k-1) \choose k-1}, \label{thebou1}
		\end{equation}
		where $l_\alpha$ and $d_k$ are defined in (\ref{dflalph}) and (\ref{leprz1}), respectively. Then,
		\begin{equation}
			P\left(\left|B_m^{(k)}(Y_n;x)-F^{(k)}(x)\right|\geq \left(2l_\alpha+\frac{1}{l_\alpha} \right) \sqrt{\frac{\rho(x)}{n}}s_k(m) \right)\leq \alpha, \label{thbou2}
		\end{equation}
		where $m>1$ is the first postive integer such that
		\begin{equation}
			|L_{m,k}(\rho;x)-\rho(x)|\leq \frac{\rho(x)}{l_\alpha^2},\label{thcon1}
		\end{equation}
		and
		\begin{equation}
			\left|\frac{(m)_k}{m^k}B_{m,k}(F^{(k)};x)-F^{(k)}(x)\right|\leq l_\alpha\sqrt{\frac{\rho(x)}{n}}s_k(m). \label{thcon2}
		\end{equation}\label{thcodf}
	\end{thm}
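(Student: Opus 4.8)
The plan is to mimic the derivation of Corollary 5.7 (Corollary \ref{coink0}) from Theorem \ref{thcob0}, replacing Theorem \ref{thcob0} by Theorem \ref{thder}. First I would start from the tail bound \rmref{thdere} in Theorem \ref{thder} and choose the free parameter $\delta$ so that the two ``deterministic'' terms on the right-hand side of the probability event are controlled. Specifically, pick $\delta = l_\alpha/(r_k(m)\sqrt{\,}\,)$-type scaling; more precisely, set $r_k(m)\delta$ equal to a quantity of order $l_\alpha \sqrt{m/(n\rho(x))}$ so that, using \rmref{dftaue} (the lower bound $\tau(\epsilon)\geq \epsilon^2/3$ valid for $0\leq\epsilon\leq 1$), the exponent in \rmref{thdere} becomes at most $-\log(2/\alpha)$ after some bookkeeping. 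The hypothesis \rmref{thebou1} on $m/n$ is exactly what guarantees that the argument of $\tau$ stays in $[0,1]$ so that this quadratic lower bound on $\tau$ applies; I would verify this first, tracking the constants $d_k$, $r_k(m)$ and $\binom{2(k-1)}{k-1}$ through \rmref{dfrkm}.

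Next I would invoke condition \rmref{thcon1}: since $|L_{m,k}(\rho;x)-\rho(x)|\le \rho(x)/l_\alpha^2$, we get the two-sided estimate $\rho(x)(1-1/l_\alpha^2)\le L_{m,k}(\rho;x)\le \rho(x)(1+1/l_\alpha^2)$. The upper bound is used to replace $L_{m,k}(\rho;x)$ by a constant multiple of $\rho(x)$ in the length of the confidence interval — this is where the term $\delta\,L_{m,k}(\rho;x)$ in \rmref{thdere} turns into something of the form $l_\alpha\sqrt{\rho(x)/n}\,s_k(m)$ plus the correction $\tfrac{1}{l_\alpha}\sqrt{\rho(x)/n}\,s_k(m)$, exactly matching \rmref{thbou2}; the $1/l_\alpha$ summand is the slack coming from the inequality $\sqrt{1+1/l_\alpha^2}\le 1 + \tfrac{1}{2l_\alpha^2}$, combined carefully (this is the same mechanism that produces the $2l_\alpha + 1/l_\alpha$ factor in Corollary \ref{coink0}). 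The lower bound $L_{m,k}(\rho;x)\ge \rho(x)(1-1/l_\alpha^2)$ is used inside the exponent, together with the definition \rmref{dflalph} of $l_\alpha=\sqrt{3\log(2e^{1/3}/\alpha)}$, to absorb the factors $e^{1/3}$ and reduce the exponential bound to $2e^{-\log(2/\alpha)}=\alpha$.

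The remaining deterministic term $\bigl|\tfrac{(m)_k}{m^k}B_{m,k}(F^{(k)};x)-F^{(k)}(x)\bigr|$ is bounded directly by \rmref{thcon2}, which states it is at most $l_\alpha\sqrt{\rho(x)/n}\,s_k(m)$; adding this to the bound on $\delta L_{m,k}(\rho;x)$ from the previous paragraph yields the coefficient $2l_\alpha + 1/l_\alpha$ in front of $\sqrt{\rho(x)/n}\,s_k(m)$, which is precisely \rmref{thbou2}. Finally I would note that $m$ is chosen as the first integer exceeding $k$ for which both \rmref{thcon1} and \rmref{thcon2} hold; such an $m$ exists because $L_{m,k}(\rho;x)\to\rho(x)$ and $\tfrac{(m)_k}{m^k}B_{m,k}(F^{(k)};x)\to F^{(k)}(x)$ as $m\to\infty$ (the latter via \rmref{debeka} and the approximation estimates of Theorem \ref{thkabo}, the former via \rmref{convez}-type convergence for the Bernstein--Kantorovich process), and the right-hand side of \rmref{thcon1} is a fixed positive number while that of \rmref{thcon2} only shrinks polynomially.

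The main obstacle I anticipate is the constant-chasing in the exponent: one must simultaneously (i) keep $r_k(m)\delta\le 1$ so that $\tau(r_k(m)\delta)\ge (r_k(m)\delta)^2/3$, (ii) arrange that $\tfrac{n}{(m)_k d_k r_k(m)}L_{m,k}(\rho;x)\cdot \tfrac{(r_k(m)\delta)^2}{3} \ge \log(2/\alpha)$, and (iii) make the resulting $\delta$ produce exactly the clean coefficient $2l_\alpha + 1/l_\alpha$ after combining with \rmref{thcon1}-\rmref{thcon2}. Getting all three to close requires the precise form of $r_k(m)$, $s_k(m)$, $d_k$ and the definition of $l_\alpha$ to fit together — in particular the identity $r_k(m)\cdot s_k(m)^2 \cdot (\text{stuff}) = (m)_k d_k \cdot (\text{stuff})$ hidden in \rmref{dfrkm} and \rmref{dfskm} — but this is exactly the kind of algebra the authors have engineered the definitions \rmref{dfrkm}, \rmref{leprz1}, \rmref{dfskm} to make work out, so no genuinely new idea beyond Theorem \ref{thder} should be needed.
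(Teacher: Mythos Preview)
Your proposal is correct and follows essentially the same route as the paper: apply Theorem~\ref{thder} with the specific choice $\delta=l_\alpha\sqrt{\rho(x)/n}\,s_k(m)$, use \rmref{thebou1} to ensure $r_k(m)\delta\le 1$ so that $\tau(r_k(m)\delta)\ge (r_k(m)\delta)^2/3$, bound the length via \rmref{thcon1}--\rmref{thcon2}, and bound the exponent via the lower estimate $L_{m,k}(\rho;x)\ge\rho(x)(1-1/l_\alpha^2)$ together with the definition \rmref{dflalph}. One small simplification: the $1/l_\alpha$ term arises directly from $\delta\,|L_{m,k}(\rho;x)-\rho(x)|\le \delta\rho(x)/l_\alpha^2=(1/l_\alpha)\sqrt{\rho(x)/n}\,s_k(m)$, so no square-root inequality of the type $\sqrt{1+1/l_\alpha^2}\le 1+1/(2l_\alpha^2)$ is needed.
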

	
	The comparison of this result with Theorem \ref{thcoba} reveals the following.  For $k\geq 1$, the length of the confidence interval for $F^{(k)}$ is that of the confidence band for $F^{(k)}$ reduced by a factor $1/\sqrt{m}$, provided that the constants are dismissed.  This follows from (\ref{dflkam}) and (\ref{dfskm}).

If $k=1$, then $d_1=1$ and $s_1(m)=\sqrt{m}$, as follows from (\ref{leprz1}) and (\ref{dfskm}), respectively. On the other hand, we see from (\ref{dfkan2}) and the comments following (\ref{dfkant}) that $L_{m,1}=B_{m,1}$.  Therefore, the following result on density estimation is an immediate consequence of Theorem \ref{thcodf}.
\begin{cor}
	Let $x\in (0,1)$.  Assume that 
	\[	\frac{m}{n}\leq \frac{\rho(x)}{ l_\alpha^2}.\]
	Then,
			\begin{equation}
		P\left(\left|B_m^{(1)}(Y_n;x)-\rho(x)\right|\geq \left(2l_\alpha+\frac{1}{l_\alpha} \right) \sqrt{\frac{m\rho(x)}{n}} \right)\leq \alpha, \label{c1bou2}
	\end{equation}
	where $m>1$ is the first positive integer such that
			\begin{equation}
		\left|B_{m,1}(\rho;x)-\rho(x)\right|\leq \min\left(\frac{\rho(x)}{l_\alpha^2},l_\alpha\sqrt{\frac{m\rho(x)}{n}}\right). \label{c1con2}
	\end{equation}\label{cocodf}
	
\end{cor}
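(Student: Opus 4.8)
The plan is to obtain Corollary \ref{cocodf} directly as the special case $k=1$ of Theorem \ref{thcodf}; consequently the whole argument reduces to evaluating the constants of that theorem at $k=1$ and checking that its two conditions on $m$ coalesce into the single condition (\ref{c1con2}).

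First I would record the elementary identities valid for $k=1$. From (\ref{leprz1}) we get $d_1=\binom{0}{0}=1$; the binomial coefficient $\binom{2(k-1)}{k-1}$ equals $\binom{0}{0}=1$; and since $(m)_1=m$, formula (\ref{dfskm}) gives $s_1(m)=\tfrac{m}{\sqrt m}\sqrt{1}=\sqrt m$. Substituting these values into hypothesis (\ref{thebou1}) of Theorem \ref{thcodf} yields exactly $m/n\le\rho(x)/l_\alpha^2$, which is the hypothesis assumed in the corollary. Likewise $\sqrt{\rho(x)/n}\,s_1(m)=\sqrt{m\rho(x)/n}$, so the event in (\ref{thbou2}) is precisely the event in (\ref{c1bou2}) and the bound $\alpha$ on its probability is inherited unchanged.

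Next I would identify the operator $L_{m,1}$ with the Bernstein--Kantorovich operator $B_{m,1}$. In the definition (\ref{dfkan2}) with $k=1$ the random variable $\widetilde{R}_{k-1}=\widetilde{R}_0$ has, by (\ref{leprz3}), distribution $\widetilde{P}(\widetilde{R}_0=0)=\binom{0}{0}^2/\binom{0}{0}=1$, i.e. $\widetilde{R}_0=0$ almost surely; hence
\[
L_{m,1}(\rho;x)=\widetilde{E}\rho\!\left(\frac{\widetilde{S}_{m-1}(x)+\widetilde{V}}{m}\right)=B_{m,1}(\rho;x),
\]
as already remarked after (\ref{dfkan2}). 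Moreover $(m)_1/m^1=1$, so $\tfrac{(m)_1}{m^1}B_{m,1}(F^{(1)};x)=B_{m,1}(\rho;x)$. With these substitutions, condition (\ref{thcon1}) becomes $|B_{m,1}(\rho;x)-\rho(x)|\le\rho(x)/l_\alpha^2$ and condition (\ref{thcon2}) becomes $|B_{m,1}(\rho;x)-\rho(x)|\le l_\alpha\sqrt{m\rho(x)/n}$; imposing both simultaneously is equivalent to imposing $|B_{m,1}(\rho;x)-\rho(x)|\le\min\bigl(\rho(x)/l_\alpha^2,\ l_\alpha\sqrt{m\rho(x)/n}\bigr)$, which is exactly (\ref{c1con2}). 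Hence the integer $m$ selected in the corollary coincides with the one produced by Theorem \ref{thcodf}, and (\ref{c1bou2}) follows.

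\textbf{Main obstacle.} There is no genuine obstacle here; the only care needed is the bookkeeping of the falling factorials and binomial coefficients at $k=1$ together with the (essentially notational) observation that $\widetilde{R}_0$ is degenerate at $0$, which turns $L_{m,1}$ into $B_{m,1}$. It is also worth noting explicitly that the standing assumptions of Section \ref{secoid}, namely $F\in C^1[0,1]$ and $\rho(x)>0$ on $(0,1)$, are what make the square roots and the divisions by $\rho(x)$ in (\ref{thebou1}), (\ref{thbou2}) and (\ref{thcon1}) meaningful, so no hypotheses beyond those of Theorem \ref{thcodf} are needed.
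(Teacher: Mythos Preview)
Your proposal is correct and follows exactly the approach the paper intends: the corollary is stated there as an immediate consequence of Theorem \ref{thcodf} for $k=1$, using precisely the observations $d_1=1$, $s_1(m)=\sqrt{m}$, and $L_{m,1}=B_{m,1}$ that you spell out. Your additional bookkeeping (the degeneracy of $\widetilde{R}_0$ and the reduction of (\ref{thcon1})--(\ref{thcon2}) to the single minimum condition (\ref{c1con2})) is exactly what the paper leaves implicit.
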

In view of (\ref{thkab1}), suppose that
\[\left|B_{m,1}(\rho;x)-\rho(x)\right|\leq \frac{C(x)}{m^{\beta/2}},\quad 0<\beta\leq 2,\]
for some positive constant $C(x)$. Then, condition (\ref{c1con2}) is satisfied if
\[\frac{C(x)}{m^{\beta/2}}\leq \min\left(\frac{\rho(x)}{l_\alpha^2},l_\alpha\sqrt{\frac{m\rho(x)}{n}}\right).\]
	This implies that the degree $m$ of the Bernstein polynomial has the order of magnitude $m\sim n^{1/(\beta+1)}$. As a consequence, the length $l$ of the confidence interval in (\ref{c1bou2}) has the order of magnitude $l\sim n^{-\beta/(2\beta+2)}$.
	
	Recall that, under analogous smoothness assumptions on $\rho(x)$, the length $l$ of the confidence band for $\rho(x)$ has the order of magnitude $l\sim n^{-\beta/(2\beta+4)}$, as seen in the comments following Corollary \ref{coubod}.
	
	Finally, suppose that $\rho(x)=c>0,\ x\in (a,b)$, with $0<a<b<1$. Then, it follows from Theorem \ref{thnull} that
\[	\left|B_{m,1}(\rho;x)-\rho(x)\right|\leq \|\rho- c\|\left(\exp\left(-(m-1)r\kern-2pt \left(x,\frac{ma}{m-1}\right)\right)+\exp\left(-(m-1)r \kern-2pt \left(x,\frac{mb-1}{m-1}\right)\right)\right),
\]
whenever $x\in \left(ma/(m-1), (mb-1)/(m-1)\right)$. Therefore, for any $x$ in the previous interval, we can take $m$ such that $m+\log m\sim\log n$, as follows from (\ref{c1con2}). By (\ref{c1bou2}), this implies that the length of the confidence interval has the order of $\left(\log n/n\right)^{1/2}$.

\section{Simulations} \label{sesimu}
To illustrate Corollaries \ref{counba}, \ref{counup}, \ref{coink0}, and \ref{cocodf}, we consider the family of distribution functions 
\begin{equation}
	F_\beta(x)=x^{\beta},\quad 0\leq x\leq 1,\quad \beta>0.\label{dffbet}
\end{equation}
Firstly, consider the uniform distribution ($\beta=1$). Figure \ref{fig:graficos} (a), illustrates Corollary \ref{counba}, establishing a comparison between the confidence bands obtained for $B_2(Y_n;x)$ and those obtained using the D-K-W inequality and the empirical process as the estimator. Figure \ref{fig:graficos} (b) shows the confidence intervals obtained in Corollary \ref{counup}. 
\begin{figure}[htbp]
	\centering
	\begin{subfigure}[b]{0.495\textwidth}
		\includegraphics[width=\textwidth]{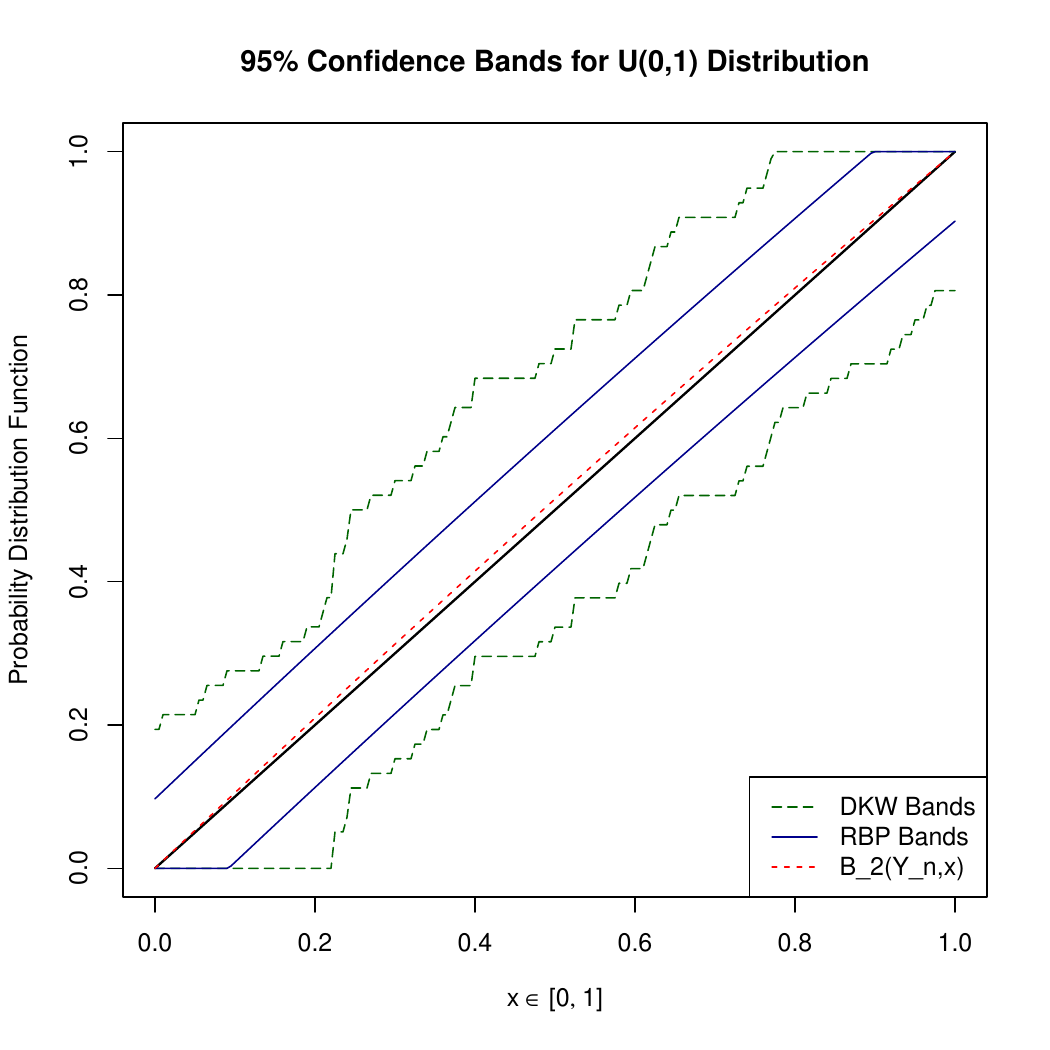}
		\caption{\small  $B_2(Y_n;x)$ confidence bands (blue continuous lines) vs. D-K-W (green dotted lines)}
	\end{subfigure}
	\hfill
	\begin{subfigure}[b]{0.495\textwidth}
		\includegraphics[width=\textwidth]{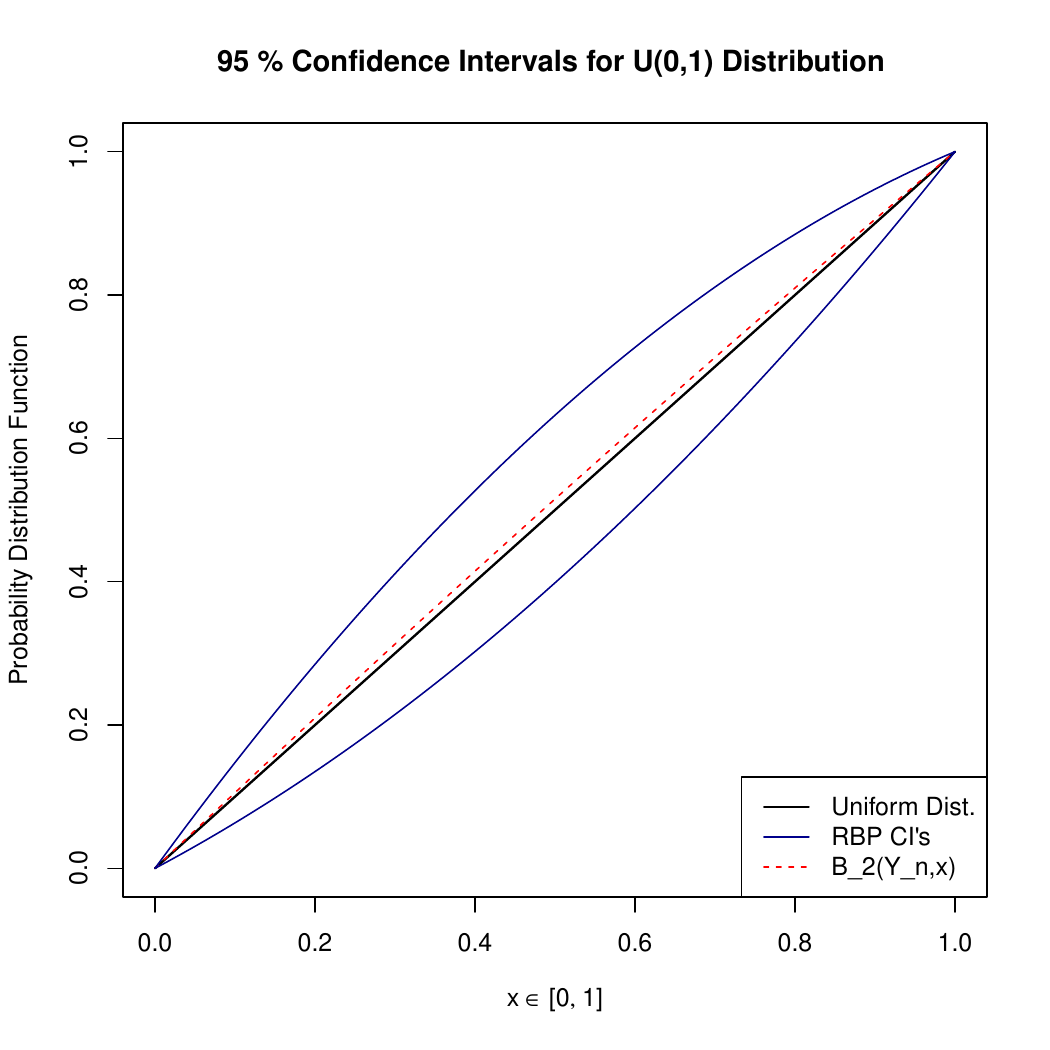}
		\caption{\small Confidence intervals based on $B_2(Y_n;x)$ (blue continuous lines)}
	\end{subfigure}
	\caption{\small $95\%$ confidence bands (a) and confidence intervals (b) based on $B_2(Y_n;x)$ for the uniform Distribution for a sample size $n=49$.}
	\label{fig:graficos}
\end{figure}

Now, we focus our attention on $F_{\beta}$ as in (\ref{dffbet}) and $\beta\neq 1$. We can estimate the first and second moduli of smoothness, as defined in (\ref{dffimo}) and (\ref{wesecm}), as follows.
\begin{lem} Let $0\leq h\leq 1/2$.  Then,
	\begin{equation}
		\omega(F_\beta;h)=h^\beta, \ 0<\beta\leq 1, \quad 	\omega(F_\beta;h)\leq \beta  h, \ 1<\beta, \label{befimo}
	\end{equation}
	and
		\begin{equation}
		\omega_2(F_\beta;h)=|2^{\beta}-2|h^\beta, \ 0<\beta\leq 2, \quad 	\omega_2(F_\beta;h)\leq \beta (\beta-1) h^2, \ 2<\beta. \label{besemo}
	\end{equation}
	\label{lebemo}
		\end{lem}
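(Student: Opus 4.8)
The plan is to reduce the two-variable suprema defining $\omega(F_\beta;\cdot)$ and $\omega_2(F_\beta;\cdot)$ to one-variable monotonicity questions, so that the extremal configuration can be identified explicitly. \textbf{First modulus.} For $0<\beta\le1$, write an arbitrary admissible pair as $x=y+t$ with $y\ge0$, $0\le t\le h$, $y+t\le1$, so that $F_\beta(x)-F_\beta(y)=(y+t)^\beta-y^\beta$; its derivative in $y$ equals $\beta\bigl((y+t)^{\beta-1}-y^{\beta-1}\bigr)\le0$ because $s\mapsto s^{\beta-1}$ is nonincreasing, hence the supremum over $y$ is attained at $y=0$ and equals $t^\beta$, and then the supremum over $t\le h$ is $h^\beta$ since $\beta>0$. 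For $\beta>1$ I would use (\ref{eqfwsm}) with $k=1$ (equivalently, the mean value theorem): $|F_\beta(x)-F_\beta(y)|\le|x-y|\,\|F_\beta^{(1)}\|=\beta|x-y|\le\beta h$, since $F_\beta^{(1)}(t)=\beta t^{\beta-1}\le\beta$ on $[0,1]$.

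\textbf{Second modulus for $0<\beta\le2$.} For a fixed step $h'\in(0,h]$, consider the symmetric second difference $g(x):=F_\beta(x+h')-2F_\beta(x)+F_\beta(x-h')$ on its admissible range $x\in[h',1-h']$, which is nonempty precisely because $h'\le h\le1/2$. I claim $|g|$ is nonincreasing on this interval, so that $\sup_x|g(x)|=|g(h')|=\bigl|(2h')^\beta-2(h')^\beta\bigr|=|2^\beta-2|(h')^\beta$. The claim follows from a sign comparison: $g'(x)=\beta\bigl((x+h')^{\beta-1}-2x^{\beta-1}+(x-h')^{\beta-1}\bigr)$ is, up to the positive factor $\beta$, the symmetric second difference of $s\mapsto s^{\beta-1}$, which is $\ge0$ when $\beta\le1$ (then $s^{\beta-1}$ is convex on $(0,\infty)$) and $\le0$ when $1\le\beta\le2$ (then $s^{\beta-1}$ is concave); meanwhile $g(x)\le0$ when $\beta\le1$ ($F_\beta$ concave) and $g(x)\ge0$ when $\beta\ge1$ ($F_\beta$ convex), so $g$ and $g'$ always have opposite signs and $|g|$ decreases. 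The degenerate cases $\beta=1$ and $\beta=2$ (for which $g\equiv0$, resp.\ $g$ is constant in $x$) are immediate. Since $|2^\beta-2|(h')^\beta$ is nondecreasing in $h'\in[0,h]$, taking the supremum over $h'$ gives $\omega_2(F_\beta;h)=|2^\beta-2|h^\beta$; and a translation shows this symmetric-difference supremum coincides with the forward-difference one of (\ref{eqfwd2}) appearing in (\ref{wesecm}).

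\textbf{Second modulus for $\beta>2$, and the main difficulty.} Here $F_\beta\in C^2[0,1]$ with $F_\beta^{(2)}(t)=\beta(\beta-1)t^{\beta-2}$ nonnegative and nondecreasing on $[0,1]$, so (\ref{eqfwsm}) with $k=2$ gives $\Delta^2_{h'}F_\beta(x)=(h')^2\,\widetilde{E}F_\beta^{(2)}\bigl(x+h'(\widetilde{V}_1+\widetilde{V}_2)\bigr)$ with argument in $[0,1]$, whence $|\Delta^2_{h'}F_\beta(x)|\le(h')^2\|F_\beta^{(2)}\|=\beta(\beta-1)(h')^2\le\beta(\beta-1)h^2$; taking suprema yields $\omega_2(F_\beta;h)\le\beta(\beta-1)h^2$. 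The only genuinely delicate point in the lemma is the sharp evaluation in the range $0<\beta\le2$: one must verify that the extremal configuration is the fully degenerate one, with step $h'=h$ and smallest evaluation point equal to $0$, and it is exactly this that forces the hypothesis $h\le1/2$ (so that $x=h$ is admissible in $g$). The bookkeeping of the two sign flips at $\beta=1$---one in $g$ and one in $g'$---is what must be handled with care; once it is settled, everything else is routine.
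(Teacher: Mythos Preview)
Your proposal is correct and follows essentially the same route as the paper's own proof: for the first modulus you exploit monotonicity of $y\mapsto (y+t)^\beta-y^\beta$ when $\beta\le1$ and the Lipschitz bound $\|F_\beta^{(1)}\|=\beta$ when $\beta>1$; for the second modulus you analyze the centered second difference, using that $F_\beta$ and $s\mapsto s^{\beta-1}$ have opposite convexity on each side of $\beta=1$ to force the extremum at $x=h'$, and you invoke $\|F_\beta^{(2)}\|$ for $\beta>2$. The paper does exactly this (with $g_\beta=-g$ and the step fixed at $h$ rather than a variable $h'\le h$), so the two arguments coincide up to sign conventions and bookkeeping.
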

		In first place, we discuss the order of magnitude of the degree $m$ of the Bernstein polynomial in the setting of Corollary \ref{coink0}, when we consider any of the distribution functions $F_\beta,\ \beta\neq 1$, defined in (\ref{dffbet}).  We claim that
		\begin{equation}
			m\sim n^{1/\beta},\quad \beta \in (0,1)\cup (1,2), \qquad
			m\sim n^{1/2},\quad \beta\geq 2. \label{conmbe}
		\end{equation}
		Actually, as far as the order of magnitude of $m$ is concerned, condition (\ref{ink0c2}) is stronger than (\ref{ink0c1}).  Assume that $\beta \in (0,1)\cup (1,2)$. By (\ref{beupbo}) and (\ref{besemo}), we have
		\[\left|B_{m}(F_\beta;x)-F_\beta(x)\right|\leq \frac{3}{2}|2^\beta-2|\frac{(\sigma(x))^\beta}{m^{\beta/2}}.\]
		Thus, condition  (\ref{ink0c2}) is satisfied if 
		\[\frac{3}{2}|2^\beta-2|\frac{(1-x)^{\beta/2}}{m^{\beta/2}}\leq \frac{(1-x^\beta)^{1/2}l_\alpha}{\sqrt{n}}.\]
		This shows the first statement in (\ref{conmbe}).  The second one is shown in a similar way.
		
	If $\beta\geq 2$, then $F_\beta\in C^{2}[0,1]$. Expression (\ref{assint}) implies that we must choose $m\sim n^{2/3}$, according to the criterion of minimizing the MSE or MISE, whereas we can choose $m\sim n^{1/2}$, as shown in (\ref{conmbe}).  Simulations for $\beta=3/2$ ($F_\beta \not \in C^{2}[0,1]$) and $\beta=2$ are provided in Figure \ref{fig:grafico2}. 
	
		\begin{figure}[htbp]
				\centering
				\begin{subfigure}[b]{0.495\textwidth}
					\includegraphics[width=\textwidth]{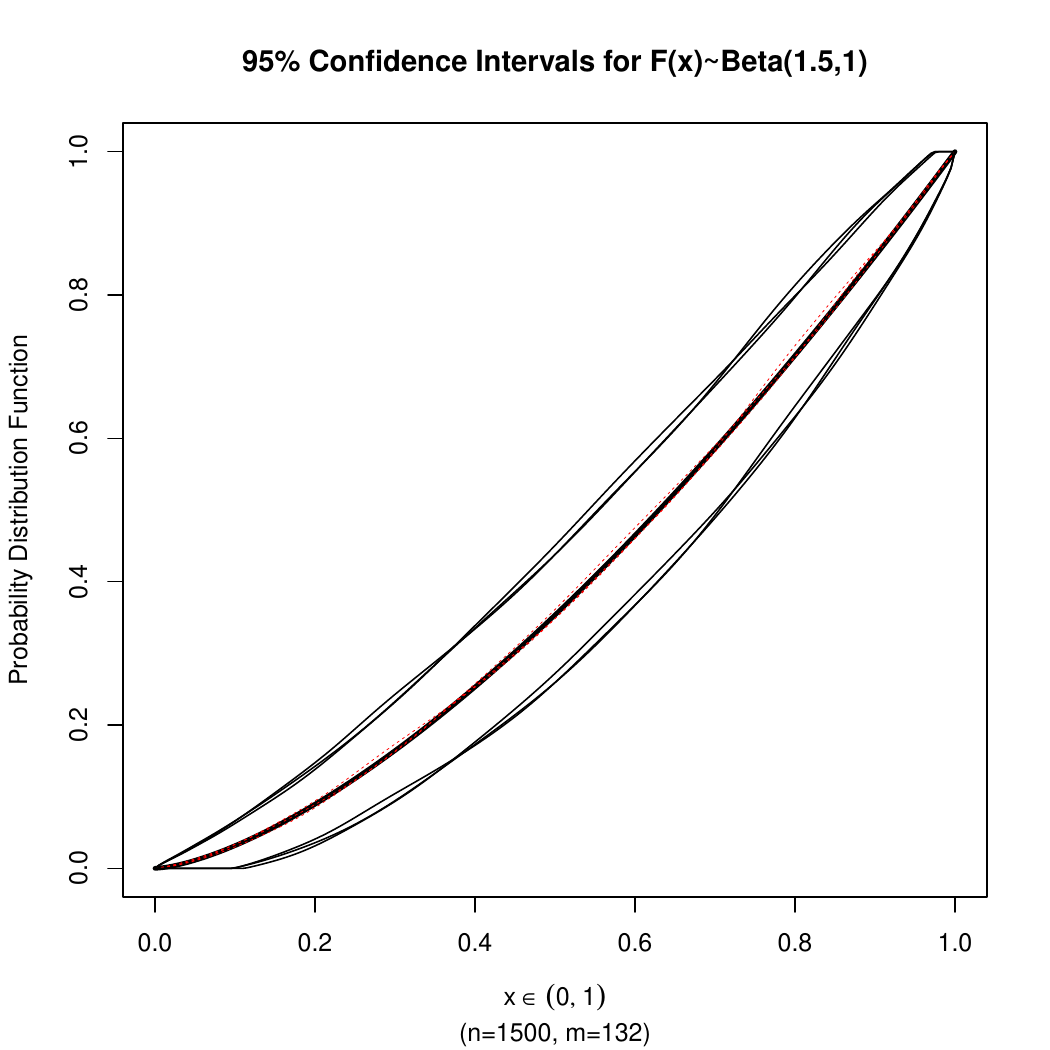}
					\caption{\small $F_\beta(x), \ \beta=3/2$}
					
				\end{subfigure}
				\hfill
				\begin{subfigure}[b]{0.495\textwidth}
					\includegraphics[width=\textwidth]{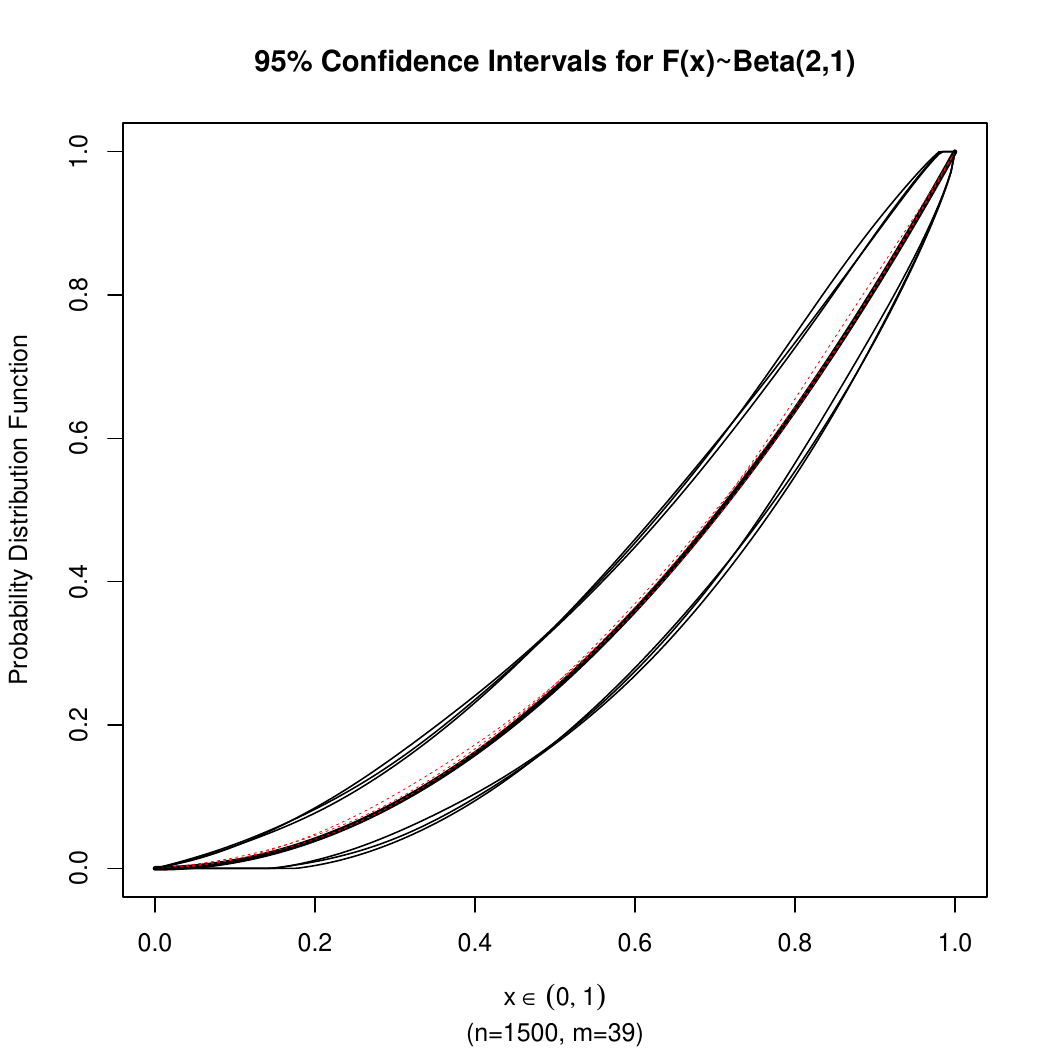}
					\caption{\small $F_\beta(x), \ \beta= 2$}
					
				\end{subfigure}
				\caption{\small $95\%$ confidence intervals for $F_\beta(x)$ with 3 replicates in each case and sample size $n=1500$. Optimal degree $m\sim n^{1/\beta}$ selected according with the smoothness condition. In red dotted line, the corresponding Bernstein polynomial estimator, $B_m(Y_n; x)$, for $F_\beta(x)$.}
				\label{fig:grafico2}
			\end{figure}

		In second place, consider the same question in the setting of Corollary \ref{cocodf} with respect to any of the probability density functions $\rho_\beta(x)=\beta x^{\beta-1},\ \beta>1$.  We claim that 
				\begin{equation}
			m\sim n^{1/\beta},\quad 1<\beta<3 , \qquad
			m\sim n^{1/3},\quad \beta\geq 3. \label{conmbed}
		\end{equation}
		We only show the case $ 1<\beta<2$. From (\ref{thkab1}) and Lemma \ref{lebemo}, we get
		\[		
			|B_{m,1}(\rho_\beta;x)-\rho_\beta(x)|\leq 2\omega\left(\rho_\beta;\frac{1}{m}\right)+\frac{3}{2}\omega_2\left(\rho_\beta;\frac{\sigma(x)}{\sqrt{m}}\right)\leq  \frac{2 \beta}{m^{\beta-1}}+\frac{3}{2} \left(2-2^{\beta-1}\right)\frac{(\sigma(x))^{\beta-1}}{m^{(\beta-1)/2}}.\label{thkab}
	\]
		Therefore, condition (\ref{c1con2}) is satisfied if $m\sim n^{1/\beta}$. Simulation results for $\beta=5/2$ ($\rho_\beta \not \in C^{2}[0,1]$) and $\beta=3$ are shown in Figure \ref{fig:grafico3}.

		\begin{figure}[htbp]
			\centering
			\begin{subfigure}[b]{0.495\textwidth}
				\includegraphics[width=\textwidth]{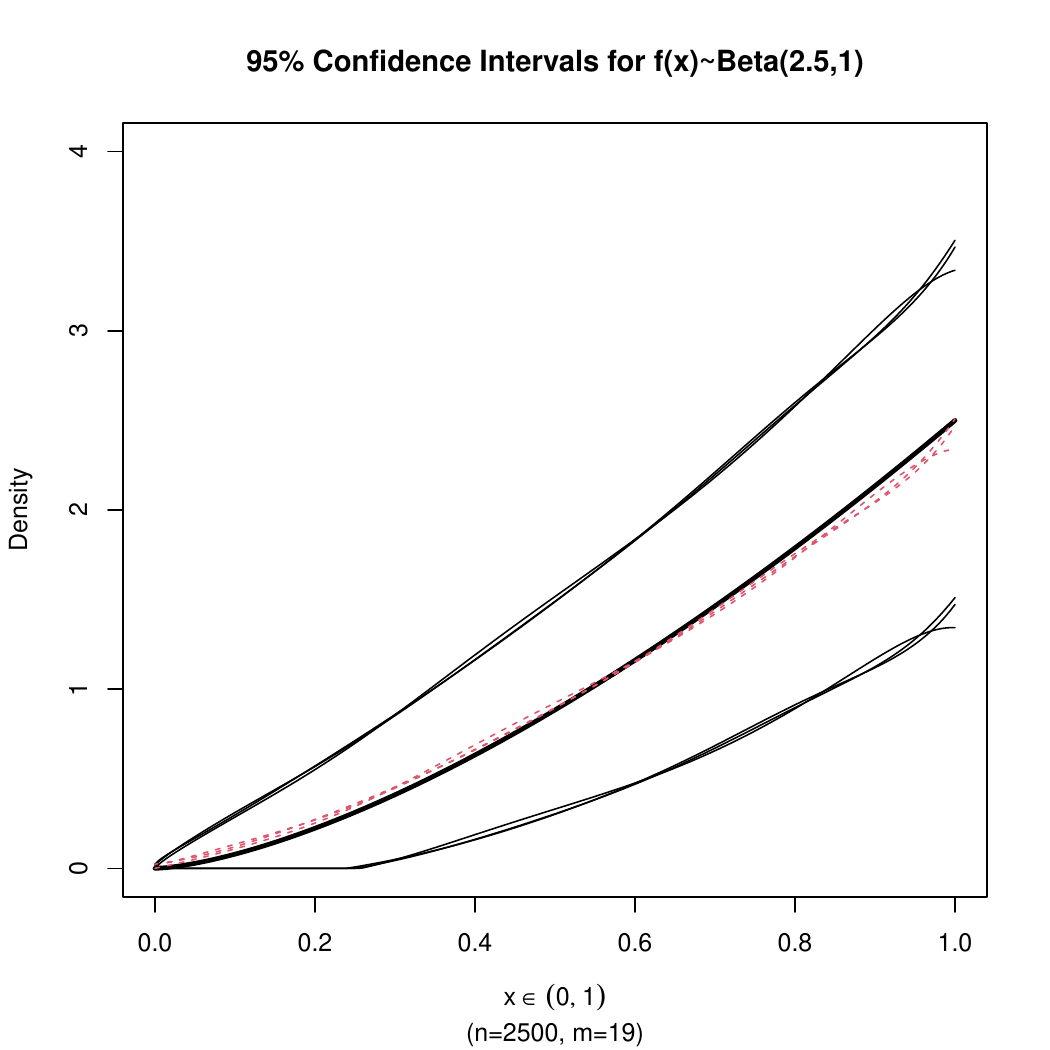}
				\caption{\small $\rho_\beta(x), \beta= 5/2$}
				
			\end{subfigure}
			\hfill
			\begin{subfigure}[b]{0.495\textwidth}
				\includegraphics[width=\textwidth]{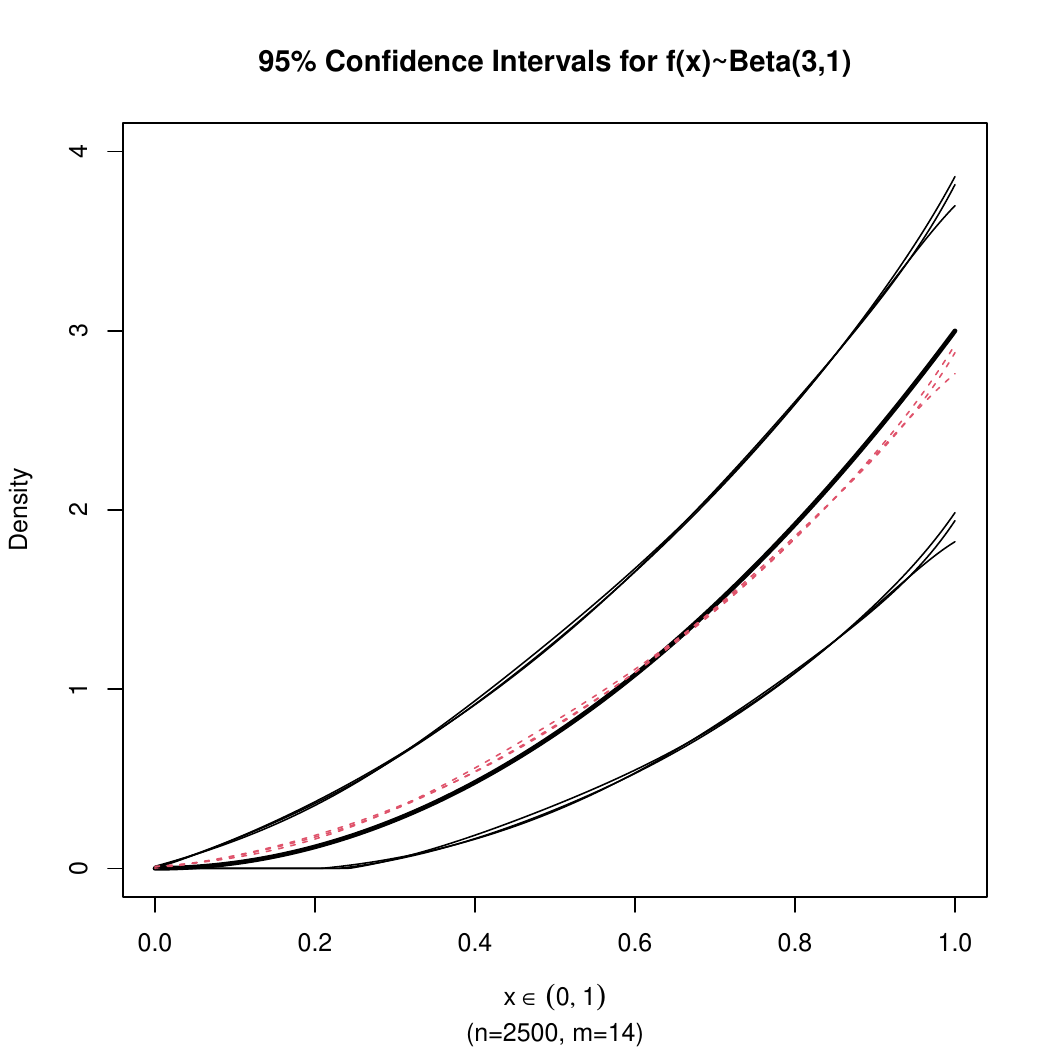}
				\caption{\small $\rho_\beta(x), \beta= 3$}
			\end{subfigure}
			\caption{\small $95\%$ confidence intervals for $\rho_\beta(x)$ with 3 replicates in each case and sample size $n=2500$. Optimal degree $m\sim n^{1/\beta}$ selected according with the smoothness condition. In red dotted line, the corresponding Bernstein polynomial estimator, $B_m^{(1)}(Y_n;x)$ for $\rho_\beta(x)$.}
			\label{fig:grafico3}
		\end{figure}
		
	As a consequence of (\ref{conmbed}), the lengths of the confidence intervals in (\ref{c1bou2}) have the respective orders of magnitude  
	\begin{equation} \sqrt{\rho_\beta(x)} n^{-(\beta-1)/(2\beta)},\quad 1<\beta<3,\qquad \hbox{and} \quad  \sqrt{\rho_\beta(x)} n^{-1/3},\quad 3\leq \beta. \label{lengtb} \end{equation}
		On the other hand, if the probability density $\rho$ is smooth, i.e. $\rho \in C^{2}[0,1]$, it is known (cf. Leblanc \cite{leabia}) that the optimal degree of the Bernstein polynomial with respect to MISE is $m\sim n^{2/5}$ and an asymptotic confidence interval can be constructed. When applied to the densities $\rho_\beta,\ \beta\geq 3$, the length of such a confidence interval has the order of magnitude (see Babu et al. \cite{baappl})
		\begin{equation}
			\sqrt{\frac{\rho_\beta(x)}{r(x)}}n^{-2/5},\quad r(x)=\sqrt{4\pi x(1-x)},\quad \beta\geq 3. \label{lengtba}
		\end{equation}
		This order is better than that in (\ref{lengtb}) for $\beta\geq 3$, whenever $x$ is far away from the boundary of $[0,1]$.  Looking at (\ref{conmbed}), we see that the results in (\ref{lengtb}) and (\ref{lengtba}) complement each other. 
		\section{Conclusions}\label{seconcl}
The main contributions of this paper can be summarized as follows.
\begin{enumerate}[(a)]
	\item We give explicit confidence bands and intervals for a distribution function $F$ concentrated on $[0,1]$ by means of random Bernstein polynomials, and for the derivatives $F^{(k)}, \ k\geq 1$, by using random Bernstein-Kantorovich type operators.  The only smoothness assumption on $F$ or $F^{(k)}, \ k\geq 1$, is continuity.
	
	\item In each case, we choose the simplest estimator, that is, the minimum degreee $m$ of the Bernstein estimator fulfilling the requirements to construct the confidence band or interval, for a given confidence level $1-\alpha$.
	
	\item The lengths of the confidence bands and intervals for $F^{(k)}, \ k\geq 0$, essentially depend on the degree of smoothness of $F^{(k)}$.  In any case, such lengths increase as $k$ increases.  As a counterpart, the degree $m$ of the Bernstein estimator decreases as $k$ increases.
	
	\item As expected, the order of magnitude of the length of the confidence band for $F^{(k)}$ is bigger than that of the confidence interval for $F^{(k)}$, excepting for $k=0$, case in which both orders are $1/\sqrt{n}$.
	
	\item Suppose that $F$ is constant in a certain subinterval $(a,b)$ of $[0,1]$.  To construct the confidence interval for $F(x)$, $x\in (a,b)$, the degree $m$ of the Bernstein estimator can be chosen as $m\sim \log n$.  A similar property holds true for $F^{(1)}$.  In such a case, the length of the corresponding confidence interval has the order of $(\log n/n)^{1/2}$.
	\item In the particular case of the uniform distribution, we consider a second-order Bernstein polynomial.   This estimator is much simpler and outperforms the classical uniform empirical process used in the D-K-W inequality. 
	\end{enumerate}
}
	
\section*{Acknowledgments}
\cs{The authors are very grateful to the referees for their careful reading of the manuscript and their constructive criticism, which greatly improved the final outcome. They also thank the Editorial Team for the additional time given to prepare the revised version of this paper.}

		First and third authors are supported by Gobierno de Aragón Research Project E48\_23R, and second author by the Research Project S41\_23R. The third author is also supported by Spanish Research Project PID2021-123737NB-100 (MINECO/FEDER).

	\appendix
	\section{Appendix.  Proofs of the results.} \label{seappe}
	\begin{proof}[Proof of Theorem \ref{thcoba}]
			Let $k\in \mathbb{N}_0$. From (\ref{berand}) and (\ref{derapp}), we have  
			\begin{equation}
				\left\|B_m^{(k)}(Y_n-F;x)\right\|\leq 2^k(m)_k\left\|\frac{S_n(x)}{n}-x\right\|.  \label{rtermk}
			\end{equation}
			Let $\epsilon=l_{k,\alpha}(m)/\sqrt{n}$.  By (\ref{cowilk}), we see that
			\[	
			\left\|\frac{(m)_k}{m^k} B_{m,k}(F^{(k)};x)-F^{(k)}(x)\right\|\leq \epsilon/2.	\]
			We therefore have from  (\ref{decobd}) and (\ref{rtermk})
			\begin{align}
				&P\left(\left\|B_m^{(k)}(Y_n;x)-F^{(k)}(x)\right\|>\epsilon \right) \leq
				P\left(\left\|B_m^{(k)}(Y_n-F;x)\right\|>\frac{\epsilon}{2} \right)\nonumber\\&\leq P\left(	\left\|\frac{S_n(x)}{n}-x\right\|> \frac{\epsilon}{2 \cdotp 2^k(m)_k}  \right)=P\left(	\left\|\frac{S_n(x)}{n}-x\right\|> \frac{\sqrt{2 \log(2/\alpha)}}{2 \sqrt{n}}  \right),	\end{align}
			where the last equality follows from (\ref{dflkam}). Thus, the result is a consequence of (\ref{expbmg}) and some simple computations.
\end{proof}

\begin{proof} [Proof of Corollary \ref{counba}]  By  (\ref{decobd}) and (\ref{center}), we have
	\begin{equation}
		B_2(Y_n,x)-x=B_2(Y_n-F,x). \label{cente2}
	\end{equation}
	From (\ref{berand}), we see that $B_2(Y_n-F,x)=0$, if $\widetilde{S}_2(x)\in\{0,2\}$.  Thus,
	\begin{equation}
		\|B_2(Y_n-F,x)\|\leq  \left\|\frac{S_n(x)}{n}-x\right\|\sup_{x\in (0,1)}\widetilde{P}\left(\widetilde{S}_2(x)=1\right) =\frac{1}{2}\left\|\frac{S_n(x)}{n}-x\right\|. \label{cente3}
	\end{equation}
	We therefore have from  (\ref{expbmg}),  (\ref{cente2}), and  (\ref{cente3})
	\[P\left(\left\|B_2(Y_n;x)-x\right\|>\delta\right)\leq P\left(\left\|\frac{S_n(x)}{n}-x\right\|>2\delta\right) \leq 2 e
	^{-8n\delta^2},\]
	thus showing (\ref{unidis}).  Again by (\ref{decobd}) and (\ref{center}), we have 
	\[	B_2^{(1)}(Y_n,x)-1=	B_2^{(1)}(Y_n-F,x). \]
	Hence, inequality (\ref{unider}) follows from (\ref{expbmg}) and (\ref{rtermk}). 
\end{proof}
Let $J$ be a subinterval of $[0,1]$.  Let $\mathbb{Z}=(Z(x),\ x\in J)$ be a stochastic process such that $\mathbb{E}Z(x)=0$ and $|Z(x)|\leq d$, for some $d>0$, $x\in J$.  Denote by $\mathbb{Z}_k=(Z_k(x),\ x\in J),\ k \in \mathbb{N}$, a sequence of independent copies of $\mathbb{Z}$ and define
\begin{equation}
	W_n(x)=Z_1(x)+\dots+Z_n(x),\quad  x\in J. \label{dfwm}
\end{equation}	
On the other hand, let $\widetilde{T}$ be a $J$-valued random variable such that
\begin{equation}
	\widetilde{P}\left(\widetilde{T}=x_i\right)=\alpha_i, \quad i=0,1,\dots, r,\quad r\in \mathbb{N}. \label{dfwitp}
\end{equation}	
Finally, we consider the random variable
\begin{equation}
	\widetilde{E}W_n(	\widetilde{T})=\sum_{i=0}^r	W_n(x_i)\alpha_i.\label{dfewit}
\end{equation}
The following concentration inequality is crucial to obtain confidence intervals. Although similar to Lemma 2.1. in Babu et al. \cite{baappl}, it is stated in a form useful to make explicit computations. 
\begin{lem} For any $c\geq d^{-2}\ \mathbb{E}\times \mathbb{\widetilde{E}}\ Z(\widetilde{T})^2$, we have
	\[P\left(|\mathbb{\widetilde{E}}W_n(\widetilde{T})|\geq ncd\epsilon\right)\leq 2 e^{-nc\tau(\epsilon)},\quad \epsilon>0,\] 
	where $\tau(\epsilon)$ is defined in (\ref{dftaue}).\label{lecowt}
\end{lem}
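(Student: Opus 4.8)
The plan is to recognize the statement as a Bennett-type concentration inequality and to obtain it through the classical Chernoff argument. The first step is to rewrite $\widetilde{\mathbb E}W_n(\widetilde T)$ as a sum of independent real random variables on $\Omega$. Since $\widetilde T$ is independent of the processes $\mathbb Z_1,\dots,\mathbb Z_n$, setting $\xi_k=\widetilde{\mathbb E}Z_k(\widetilde T)=\sum_{i=0}^r\alpha_i Z_k(x_i)$ we have $\widetilde{\mathbb E}W_n(\widetilde T)=\sum_{k=1}^n\xi_k$, and the $\xi_k$ are independent copies of $\xi:=\widetilde{\mathbb E}Z(\widetilde T)=\sum_{i=0}^r\alpha_i Z(x_i)$. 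From the hypotheses $\mathbb E Z(x_i)=0$ and $|Z(x_i)|\le d$ one gets at once $\mathbb E\xi=0$ and $|\xi|\le\sum_i\alpha_i|Z(x_i)|\le d$. Moreover, by Jensen's inequality applied with the probability weights $(\alpha_i)$, $\xi^2=(\widetilde{\mathbb E}Z(\widetilde T))^2\le\widetilde{\mathbb E}Z(\widetilde T)^2$, hence $\mathbb E\xi^2\le\mathbb E\times\widetilde{\mathbb E}\,Z(\widetilde T)^2\le cd^2$, the last inequality being precisely the assumption on $c$.

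Next I would run the standard exponential-moment estimate. Using that $x\mapsto(e^{x}-1-x)/x^2$ is nondecreasing on $\mathbb R$, for $\lambda>0$ and $|\xi|\le d$ one has $e^{\lambda\xi}\le 1+\lambda\xi+\xi^2(e^{\lambda d}-1-\lambda d)/d^2$; taking expectations, using $\mathbb E\xi=0$, $\mathbb E\xi^2\le cd^2$ and $1+y\le e^y$, gives $\mathbb E e^{\lambda\xi}\le\exp\big(c(e^{\lambda d}-1-\lambda d)\big)$, so that $\mathbb E e^{\lambda\widetilde{\mathbb E}W_n(\widetilde T)}\le\exp\big(nc(e^{\lambda d}-1-\lambda d)\big)$ by independence. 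Markov's inequality with threshold $t=ncd\epsilon$ then yields
\[P\big(\widetilde{\mathbb E}W_n(\widetilde T)\ge ncd\epsilon\big)\le\exp\big(nc(e^{\lambda d}-1-\lambda d-\lambda d\epsilon)\big).\]
Putting $u=\lambda d$ and minimizing $e^{u}-1-u(1+\epsilon)$ over $u>0$ gives $u=\log(1+\epsilon)$ and minimal value $\epsilon-(1+\epsilon)\log(1+\epsilon)=-\tau(\epsilon)$, with $\tau$ as in (\ref{dftaue}), so the one-sided bound $e^{-nc\tau(\epsilon)}$ follows. Applying the same argument to $-\xi$ (which has the same mean, the same bound $d$, and the same variance control) and adding the two tail estimates produces the factor $2$ and finishes the proof.

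The computation is entirely routine; the one point that requires care is the reduction itself and, inside it, the variance bound. One must not confuse $\widetilde{\mathbb E}(Z(\widetilde T)^2)=\sum_i\alpha_i Z(x_i)^2$ with $(\widetilde{\mathbb E}Z(\widetilde T))^2$: it is only after the Jensen step that the hypothesis $c\ge d^{-2}\,\mathbb E\times\widetilde{\mathbb E}\,Z(\widetilde T)^2$ delivers exactly the bound $\mathbb E\xi^2\le cd^2$ needed for the Bennett estimate to close with the stated constant in the exponent. Everything else is the classical Chernoff optimization already encoded in the definition of $\tau$.
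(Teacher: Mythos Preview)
Your proof is correct, and it follows a slightly different but equally valid route from the paper's. The paper first bounds $\mathbb{E}e^{\beta Z(x)}$ pointwise in $x$ via the Taylor expansion, and then combines these pointwise MGF bounds across the atoms $x_0,\dots,x_r$ of $\widetilde T$ by means of the \emph{generalized H\"older inequality}
\[
\mathbb{E}\prod_{i=0}^{r} e^{\beta W_n(x_i)\alpha_i}\le \prod_{i=0}^{r}\bigl(\mathbb{E}e^{\beta W_n(x_i)}\bigr)^{\alpha_i},
\]
which directly produces the MGF bound with $\mathbb{E}\times\widetilde{\mathbb{E}}\,Z(\widetilde T)^2$ in the exponent. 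You instead collapse the $\widetilde T$-average \emph{first}, forming the single bounded centered variable $\xi=\sum_i\alpha_i Z(x_i)$, and use Jensen's inequality $(\widetilde{\mathbb{E}}Z(\widetilde T))^2\le\widetilde{\mathbb{E}}Z(\widetilde T)^2$ to control $\mathbb{E}\xi^2$ by the same quantity; after that, the argument is the textbook Bennett bound for an i.i.d.\ sum. Both routes arrive at the same Chernoff optimization and the same exponent $nc\,\tau(\epsilon)$. Your path is arguably more elementary---it avoids invoking the generalized H\"older inequality and makes transparent that the statement is nothing but Bennett's inequality applied to $\xi$---while the paper's H\"older step yields the intermediate inequality (\ref{coewit}) for the MGF itself, which is of independent interest as a subgaussian-type estimate.
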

\begin{proof}Let $x\in J$ and $\beta>0$.  We claim that
	\begin{equation}
		\max\left(\mathbb{E}e^{\beta \mathbb{\widetilde{E}}W_n(\widetilde{T}) }, \mathbb{E}e^{-\beta \mathbb{\widetilde{E}}W_n(\widetilde{T}) }\right)\leq exp\left(n \frac{e^{\beta d}-1-\beta d}{d^2}\mathbb{E}\times \mathbb{\widetilde{E}}\ Z(\widetilde{T})^2\right). \label{coewit}
	\end{equation}
	Indeed, we have
	\begin{align*}
		\mathbb{E}e^{\beta Z(x)}&=1+\sum_{j=2}^\infty\frac{\beta^j}{j!} \mathbb{E} Z(x)^j\leq 1+\frac{\mathbb{E}Z(x)^2}{d^2}\sum_{j=2}^\infty\frac{(\beta d)^j}{j!}\\&=1+\frac{e^{\beta d}-1-\beta d}{d^2}\mathbb{E}Z(x)^2\leq exp \left(\frac{e^{\beta d}-1-\beta d}{d^2}\mathbb{E}Z(x)^2\right).
	\end{align*}
	By (\ref{dfwm}), this implies that 
	\[\mathbb{E}e^{\beta W_n(x)}\leq exp \left(n\frac{e^{\beta d}-1-\beta d}{d^2}\mathbb{E}Z(x)^2\right).\]
	Hence, we get from (\ref{dfewit}) and the generalized H\"{o}lder's inequality (cf. Kwon \cite{kwexte})
	\begin{align*}
		&\mathbb{E} e^{\beta \mathbb{\widetilde{E}} W_n(\widetilde{T})}=\mathbb{E}\prod_{i=0}^{r}e^{\beta W_n(x_i)\alpha_i}\leq \prod_{i=0}^{r}\left(\mathbb{E}e^{\beta W_n(x_i)}\right)^{\alpha_i}\\&\leq \prod_{i=0}^{r}exp \left(n\frac{e^{\beta d}-1-\beta d}{d^2}\mathbb{E}Z(x_i)^2  \alpha_i\right)= exp\left(n \frac{e^{\beta d}-1-\beta d}{d^2}\mathbb{E}\times \mathbb{\widetilde{E}}\ Z(\widetilde{T})^2\right).
	\end{align*}
	This shows claim (\ref{coewit}) for the first term on the left-hand side.  The inequality for the second term is shown \cs{by using $-Z(x)$ instead of $Z(x)$}. 
	By Chernoff's inequality, claim (\ref{coewit}), and the inequality $e^{|x|}\leq e^{x}+e^{-x}, \ x \in \mathbb{R}$, we obtain 
	\[P\left(|\mathbb{\widetilde{E}}W_n(\widetilde{T})|\geq ncd\epsilon\right)\leq e^{-\beta ncd \epsilon}\mathbb{E}e^{\beta |\mathbb{\widetilde{E}}W_n(\widetilde{T})| } \leq 2 exp\left(n c\left(e^{\beta d}-1-\beta d(1+\epsilon)\right)\right).\]
	Choosing the value of $\beta$ minimizing the exponent, that is, $e^{\beta d}-1=\epsilon$, we have from (\ref{dftaue})
	\[P\left(|\mathbb{\widetilde{E}}W_n(\widetilde{T})|\geq ncd\epsilon\right)\leq  2 e^{-n c\tau (\epsilon)}.\]
	This completes the proof.
\end{proof}
Inequality (\ref{coewit}) says that the random variable $\mathbb{\widetilde{E}}W_n(\widetilde{T})$ is essentially subgaussian with variance proxy $n\ \mathbb{E}\times \mathbb{\widetilde{E}}\ Z(\widetilde{T})^2$ (cf. Bobkov et al. \cite{bochst}).

\begin{proof}[Proof of Theorem \ref{thcob0}] 
	
	We apply Lemma \ref{lecowt} to the stochastic process $\mathbb{Z}$ defined as
	\begin{equation}
		Z(x)=S_1(x)-x, \quad x\in(0,1). \label{dfst0}
	\end{equation}
	From (\ref{berand}), we see that $B_m(Y_n-F;x)=0$, if $\widetilde{S}_m(x)\in \{0,m\}$.  For this reason, we consider the random variable $\widetilde{T}$ defined as 
	\begin{equation}
		\widetilde{P}\left(\widetilde{T}=F(i/m)\right)=\frac{p_{m,i}(x)}{q_m(x)}, \quad i=1,\dots, m-1,\quad q_m(x)=1-x^m-(1-x)^m. \label{dfwit0}
	\end{equation}
	By (\ref{berand}) and (\ref{dfewit}), this implies that
	\begin{equation}
		B_m(Y_n-F;x)=\frac{q_m(x)}{n} \widetilde{E}W_n(\widetilde{T}). \label{reran1}
	\end{equation}	
	On the other hand, observe that $d=1$. Also, we have from (\ref{dfst0}), (\ref{dfwit0}), and Fubini's theorem
	\begin{align}
		\mathbb{E}\times \mathbb{\widetilde{E}}\ Z(\widetilde{T})^2&=\mathbb{\widetilde{E}}\times \mathbb{E} \ Z(\widetilde{T})^2=\mathbb{\widetilde{E}}\sigma^2 (\widetilde{T})\nonumber\\&=\sum_{i=1}^{m-1}\sigma^2 (F(i/m))\frac{p_{m,i}(x)}{q_m(x)}=\frac{B_m\left(\sigma^2 (F);x\right)}{q_m(x)}. \label{dfvat0}
	\end{align}
		Let $\epsilon>0$. We have from  (\ref{reran1}), (\ref{dfvat0}), and Lemma \ref{lecowt} 
	\begin{align} &P\left(|B_m(Y_n-F;x)|\geq \epsilon B_m\left(\sigma^2 (F);x\right)\right)=P\left(|\widetilde{E}W_n(\widetilde{T})|\geq n \frac{ B_m\left(\sigma^2 (F);x\right)}{ q_m(x)}\epsilon\right)\nonumber\\& \leq 2 exp\left(-n \frac{ B_m\left(\sigma^2 (F);x\right)}{q_m(x)}\tau(\epsilon)\right). \label{cobeF1}\end{align}
	Finally, it follows from (\ref{decobe}) that
	\begin{align*}	&P\left(\left|B_m(Y_n;x)-F(x)\right|\geq \epsilon B_m(\sigma^2(F);x)+ \left|B_m(F;x)-F(x)\right| \right)\\&\leq  P\left(\left|B_m(Y_n-F;x)\right|\geq \epsilon B_m(\sigma^2(F);x)\right).\end{align*}
	This and (\ref{cobeF1}) show the result.\end{proof}

\begin{proof}[Proof of Corollary \ref{counup}]
	In this case, we have $\left|B_m(F;x)-F(x)\right|=0$.  Moreover, a straightforward calculus shows that
	\[B_m(\sigma^2(F);x)=\widetilde{E}\left(\frac{\widetilde{S}_m(x)}{m}\left(1-\frac{\widetilde{S}_m(x)}{m}\right)\right)=\frac{m-1}{m}\sigma^2(x).\]
	From (\ref{dfwit0}), we see that $q_2(x)=2\sigma^2(x)$. Thus, the first inequality in  (\ref{counu1}) follows by choosing $m=2$ and $\epsilon=8\delta$ in Theorem \ref{thcob0}.  The second inequality readily follows from (\ref{dftaue}).
\end{proof}

\begin{proof}[Proof of Corollary \ref{coink0}]   We apply Theorem \ref{thcob0} with
	\begin{equation}
		\epsilon=\frac{l_{\alpha}}{\sigma(F(x))}\frac{1}{\sqrt{n}}\leq 1. \label{ink0ep}
	\end{equation}
By (\ref{ink0c1}), (\ref{ink0c2}) and (\ref{ink0ep}), the length of the confidence interval in (\ref{eqbth2}) is bounded above by 	
	\begin{align}
		&\epsilon B_m(\sigma^2(F);x)+\left|B_m(F;x)-F(x)\right|\nonumber\\
		&\leq \epsilon \sigma^2(F(x))+\epsilon \left|B_m(\sigma^2(F);x)-\sigma^2(F(x))\right|+\left|B_m(F;x)-F(x)\right|\leq \left(2 l_{\alpha}+\frac{1}{l_{\alpha}}\right)\frac{\sigma (F(x))}{\sqrt{n}},\label{ink0bo}
	\end{align}
On the other hand, using (\ref{dftaue}), (\ref{dfqmx}) and (\ref{ink0ep}), the exponent in (\ref{eqbth2}) is bounded by
\begin{align*}
	&	-nB_m\left(\sigma^2 (F);x\right)\tau(\epsilon) \leq -\frac{n\epsilon^2}{3}B_m\left(\sigma^2 (F);x\right)\\ &\leq -\frac{n\epsilon^2}{3}\sigma^2(F(x))+\frac{n\epsilon^2}{3}|B_m(\sigma^2(F);x)-\sigma^2(F(x))|\leq -\frac{l_{\alpha}^2}{3}+\frac{1}{3},
	\end{align*}
	where in the last inequality we have used (\ref{ink0c1}) and (\ref{ink0ep}). Thus, the upper bound in (\ref{eqbth2}) is bounded above by 
	\[2 e^{1/3}e^{-l_{\alpha}^2/3}=\alpha,\]
	as follows from (\ref{dflalph}). This, (\ref{ink0bo}), and Theorem \ref{thcob0} show the result.\end{proof}
	
To prove Theorem \ref{thder}, we start from formulas (\ref{decobd}) and (\ref{derapp}).  In view of (\ref{derapp}), we apply Lemma \ref{lecowt} to the stochastic process $\mathbb{Z}=(Z(x),\ 0\leq x \leq 1-k/m)$ defined by
			\begin{align}
				Z(x)&=\sum_{j=0}^{k-1} {k-1 \choose j}(-1)^{k-1-j}\left(S_1\left(F\left(x+\frac{j+1}{m}\right)\right)-S_1\left(F\left(x+\frac{j}{m}\right)\right)\right.\nonumber\\ &\left.-\left(F\left(x+\frac{j+1}{m}\right)-F\left(x+\frac{j}{m}\right)\right)\right)=G(x)-\Delta_{1/m}^kF(x), \label{dfzder}\end{align} 
			where
			\begin{equation}
				G(x)=\sum_{j=0}^{k-1} {k-1 \choose j}(-1)^{k-1-j}\left(S_1\left(F\left(x+\frac{j+1}{m}\right)\right)-S_1\left(F\left(x+\frac{j}{m}\right)\right)\right). \label{dfgdex}
			\end{equation}
			Obviously, $\mathbb{E}Z(x)=0,\ 0\leq x \leq 1-k/m$.  Other useful properties of $\mathbb{Z}$ are given in the following auxiliary result.
			\begin{lem} Let $0\leq x \leq 1-k/m$.  Then,
				\begin{equation}
					\left|Z(x)\right|\leq d_{k}, 	\label{leprz1b}
				\end{equation}
				where $d_k$ is defined in (\ref{leprz1}).  Moreover,
				\begin{equation}
					\mathbb{E}Z(x)^2\leq \mathbb{E}G(x)^2=\frac{\displaystyle {2(k-1) \choose k-1}}{m}\mathbb{\widetilde{E}}\rho\left(x+\frac{\widetilde{R}_{k-1}+\widetilde{V}}{m}\right),	\label{leprz2}
				\end{equation}
				where the random variables $\widetilde{R}_{k-1}$ and $\widetilde{V}$ are defined in (\ref{dfkan2}) and (\ref{leprz3}).
				\label{leprzx}
			\end{lem}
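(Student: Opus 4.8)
The plan is to first rewrite the process $\mathbb{Z}$ so as to expose its combinatorial structure, and then read both assertions off it. Put $g(y):=S_1(F(y))=1_{\{U\le F(y)\}}$, a nondecreasing, $\{0,1\}$‑valued step function with $\mathbb{E}g(y)=F(y)$; then $G(x)=\Delta^{k}_{1/m}g(x)$ and, subtracting expectations, $Z(x)=\Delta^{k}_{1/m}(g-F)(x)$ with $\mathbb{E}Z(x)=0$. The observation that drives everything is that the one‑step increments
\[
\chi_{j}:=g\Big(x+\tfrac{j+1}{m}\Big)-g\Big(x+\tfrac{j}{m}\Big)=1_{(F(x+j/m),\,F(x+(j+1)/m)]}(U),\qquad j=0,\dots,k-1,
\]
are indicators of pairwise disjoint intervals, since $F$ is nondecreasing; hence at most one $\chi_{j}$ equals $1$, and $\sum_{j=0}^{k-1}\mathbb{E}\chi_{j}=F(x+k/m)-F(x)\le 1$. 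In this notation $G(x)=\sum_{j=0}^{k-1}\binom{k-1}{j}(-1)^{k-1-j}\chi_{j}$ and $\Delta^{k}_{1/m}F(x)=\sum_{j=0}^{k-1}\binom{k-1}{j}(-1)^{k-1-j}\mathbb{E}\chi_{j}$.

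For the supremum bound (\ref{leprz1b}): on the event $\{\chi_{J}=1\}$ one has $G(x)=(-1)^{k-1-J}\binom{k-1}{J}$, while $G(x)=0$ off $\bigcup_{j}\{\chi_{j}=1\}$, so $|G(x)|\le\max_{0\le j\le k-1}\binom{k-1}{j}=\binom{k-1}{\lfloor (k-1)/2\rfloor}$, which equals $d_k=\binom{k-1}{\lfloor k/2\rfloor}$ after separating the cases $k$ even and $k$ odd. For the centered quantity I would, on $\{\chi_{J}=1\}$, use $1-\mathbb{E}\chi_{J}\ge\sum_{j\ne J}\mathbb{E}\chi_{j}$ to write
\[
Z(x)=(-1)^{k-1-J}\binom{k-1}{J}q_{0}+\sum_{j\ne J}\Big((-1)^{k-1-J}\binom{k-1}{J}-(-1)^{k-1-j}\binom{k-1}{j}\Big)\mathbb{E}\chi_{j},
\]
where $q_{0}=1-\sum_{j}\mathbb{E}\chi_{j}\ge 0$, so that $Z(x)$ is a combination of $(-1)^{k-1-J}\binom{k-1}{J}$ and the displayed differences with nonnegative coefficients of total mass $1-\mathbb{E}\chi_{J}\le 1$; on the complementary event $\{\sum_{j}\chi_{j}=0\}$ one has instead $Z(x)=-\Delta^{k}_{1/m}F(x)$, with $|\Delta^{k}_{1/m}F(x)|\le\max_{j}\binom{k-1}{j}\sum_{j}\mathbb{E}\chi_{j}\le d_k$. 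I expect the delicate point to lie exactly here: a crude bound on the combination above gives only $2d_k$, and pinning the constant down to $d_k$ forces one to exploit how the index $J$ carrying $U$ is aligned with the indices carrying the mass $\mathbb{E}\chi_{j}$; that compatibility is the main obstacle of the proof.

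For the variance bound (\ref{leprz2}): since $\mathbb{E}Z(x)=0$, $\mathbb{E}Z(x)^{2}=\mathrm{Var}(G(x))\le\mathbb{E}G(x)^{2}$. Disjointness of the $\chi_{j}$ kills all cross terms and $\chi_j^2=\chi_j$, so $\mathbb{E}G(x)^{2}=\sum_{j=0}^{k-1}\binom{k-1}{j}^{2}\mathbb{E}\chi_{j}=\sum_{j=0}^{k-1}\binom{k-1}{j}^{2}\big(F(x+\tfrac{j+1}{m})-F(x+\tfrac{j}{m})\big)$. Writing each increment, using $F\in C^{1}$, as $F(x+\tfrac{j+1}{m})-F(x+\tfrac{j}{m})=\tfrac1m\int_{0}^{1}\rho\big(x+\tfrac{j+v}{m}\big)\,dv=\tfrac1m\widetilde{E}\rho\big(x+\tfrac{j+\widetilde{V}}{m}\big)$, and then invoking the Vandermonde identity $\sum_{j}\binom{k-1}{j}^{2}=\binom{2(k-1)}{k-1}$ together with the law (\ref{leprz3}) of $\widetilde{R}_{k-1}$, which is precisely $\binom{k-1}{j}^{2}/\binom{2(k-1)}{k-1}$, one obtains $\mathbb{E}G(x)^{2}=\tfrac1m\binom{2(k-1)}{k-1}\widetilde{E}\rho\big(x+\tfrac{\widetilde{R}_{k-1}+\widetilde{V}}{m}\big)$, which is (\ref{leprz2}). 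This half of the lemma is routine once the disjointness has been noted.
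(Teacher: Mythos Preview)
Your treatment of \eqref{leprz2} is correct and essentially identical to the paper's: both use disjointness of the $\chi_j$ to reduce $\mathbb{E}G(x)^2$ to $\sum_j\binom{k-1}{j}^{2}\mathbb{E}\chi_j$, rewrite each increment as $m^{-1}\widetilde{\mathbb{E}}\,\rho\big(x+(j+\widetilde V)/m\big)$, and then package the weights via Vandermonde's identity and the law \eqref{leprz3} of $\widetilde R_{k-1}$.

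For \eqref{leprz1b} you are right to hesitate, and the paper is wrong not to. The paper's entire argument is the one-line assertion that ``looking at the first equality in \eqref{dfzder}, we see that $|Z(x)|\le\max_j\binom{k-1}{j}$''; it does not address the centering at all. Your analysis correctly shows that on the event $\{\chi_J=1\}$ one has $Z(x)=(-1)^{k-1-J}\binom{k-1}{J}-\Delta^k_{1/m}F(x)$, and that a direct bound gives only $2d_k$. The obstacle you flag is not a mere technicality: the inequality $|Z(x)|\le d_k$ is \emph{false} in general. For $k=2$ one has $Z(x)=-\chi_0+\chi_1+p_0-p_1$ with $p_j=F(x+(j+1)/m)-F(x+j/m)$; on the event $\{\chi_0=1\}$, of probability $p_0>0$, this gives $|Z(x)|=1-p_0+p_1$, which exceeds $d_2=1$ whenever $p_1>p_0$, i.e.\ whenever $F$ is strictly convex on $[x,x+2/m]$. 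So there is nothing further to extract from your decomposition: the statement should simply be weakened to $|Z(x)|\le 2d_k$, which follows at once from $|G(x)|\le d_k$ and $|\Delta^k_{1/m}F(x)|\le d_k\sum_j p_j\le d_k$. This only doubles the constant $d$ entering Lemma~\ref{lecowt} and does no real damage to the downstream results.
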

			\begin{proof}
				Looking at the first equality in (\ref{dfzder}), we see that 
				\begin{equation}	|Z(x)|\leq \max_{0\leq j\leq k-1}{k-1 \choose j}= {k-1 \choose \lfloor k/2\rfloor}. \label{leprz3b}  \end{equation} By the second equality in (\ref{dfzder}), we have
				\begin{equation}
					\mathbb{E}Z(x)^2=	\mathbb{E}G(x)^2-\left(\Delta_{1/m}^kF(x)\right) ^2\leq \mathbb{E}G(x)^2. \label{leprz4}
				\end{equation}
				On the other hand, the probability law of $G(x)$ is given by 
				\[P\left(G(x)={k-1 \choose j}(-1)^{k-1-j}\right)=F\left(x+\frac{j+1}{m}\right)-F\left(x+\frac{j}{m}\right), \ j=0,1,\dots, k-1,\] 
				and 
				\[P\left(G(x)=0\right)=F\left(x\right)+1-F\left(x+\frac{k}{m}\right).\]
				We therefore have \cs{from (\ref{eqfwsm}) }
				\begin{align*}
					\mathbb{E}G(x)^2&=\sum_{j=0}^{k-1} {k-1 \choose j}^2 \left(F\left(x+\frac{j+1}{m}\right)-F\left(x+\frac{j}{m}\right)\right)\\&=\frac{1}{m}
					\sum_{j=0}^{k-1} {k-1 \choose j}^2 \mathbb{\widetilde{E}}\rho\left(x+\frac{j+\widetilde{V}}{m}\right)=\frac{\displaystyle {2(k-1) \choose k-1}}{m}\mathbb{\widetilde{E}}\rho\left(x+\frac{\widetilde{R}_{k-1}+\widetilde{V}}{m}\right),
				\end{align*}
				where the last equality follows from (\ref{leprz3}).  The proof is complete.		\end{proof}
				
				As follows from (\ref{eqfwsm}), inequality (\ref{leprz4}) is sharp, as $m\rightarrow \infty$.
				
					\begin{proof}[Proof of Theorem \ref{thder}]	Let $\mathbb{Z}$ be as in (\ref{dfzder}) and denote
	\begin{equation}
		\widetilde{T}=\frac{\widetilde{S}_{m-k}(x)}{m}.\label{lecoi1}
	\end{equation}
	By (\ref{derapp}), (\ref{dfewit}), and (\ref{lecoi1}), we can write
	\begin{equation}
		B_m^{(k)}(Y_n-F;x)=(m)_k \frac{\widetilde{E}W_n\left(\widetilde{T}\right)}{n}.\label{lecoi2}
	\end{equation}
	We thus have from (\ref{decobd})
		\begin{align} &P\left(\left|B_m^{(k)}(Y_n;x)-F^{(k)}(x)\right|\geq \delta L_{m,k}(\rho;x)+ \left|\frac{(m)_{k}}{m^k}B_{m,k}(F^{(k)};x)-F^{(k)}(x)\right|\right)\nonumber\\&\leq P\left(\left|B_m^{(k)}(Y_n-F;x)\right|\geq \delta L_{m,k}(\rho;x)\right)\leq P\left(\left|\widetilde{E}W_n\left(\widetilde{T}\right)\right|\geq \frac{n}{(m)_{k}}\delta L_{m,k}(\rho;x)\right).\label{thder1}\end{align}
	On the other hand, we have from Fubini's theorem, Lemma \ref{leprzx}, and (\ref{lecoi1})
\begin{align}
	&\frac{1}{d_k^2}	\mathbb{E}\times \mathbb{\widetilde{E}}\left(Z(\widetilde{T})\right)^2=	\frac{1}{d_k^2}	\mathbb{\widetilde{E}}\times \mathbb{E} \left(Z(\widetilde{T})\right)^2\nonumber\\&\leq \frac{\displaystyle{2(k-1) \choose k-1}}{m d_k^2}\widetilde{E}\rho\left(\frac{\widetilde{S}_{m-k}(x)+\widetilde{R}_{k-1}+\widetilde{V}}{m}\right)=\frac{1}{(m)_k d_k r_k(m)}L_{m,k}(\rho;x)=:c,\label{thder2}
\end{align}
	where the last equality follows from (\ref{dfrkm}) and (\ref{dfkan2}).  
	
	To estimate the right-hand side in (\ref{thder1}), we use Lemma \ref{lecowt} with $c$ as given in (\ref{thder2}), $d=d_k$ and $\epsilon=r_k(m)\delta$ to obtain 
	\begin{align*}
		&P\left(\left|\widetilde{\mathbb{E}}W_n(\widetilde{T})\right|\geq \frac{n}{(m)_k }\delta L_{m,k}(\rho;x)\right)=P\left(\left|\widetilde{E}W_n(\widetilde{T})\right|\geq   \frac{n}{(m)_k r_k(m)} L_{m,k}(\rho;x) r_k(m) \delta\right) 	 \\& \leq 2 exp\left(\frac{-n}{(m)_k d_k r_k(m)} L_{m,k}(\rho;x)\tau(r_k(m)\delta)\right).\end{align*}
This, in conjunction with (\ref{thder1}), shows the result.
\end{proof}

	\begin{proof}[Proof of Theorem \ref{thcodf}]
	Looking at the exponent in (\ref{thdere}), we choose
	\begin{equation}
		\delta=l_\alpha\sqrt{\frac{(m)_k d_k}{r_k(m) n \rho(x)}}=\frac{l_\alpha}{\sqrt{n \rho(x)}}s_k(m), \label{thcon21}
	\end{equation}
	where the second equality follows from (\ref{dfrkm}) and (\ref{dfskm}).  By (\ref{dfrkm}) and (\ref{thcon21}), we have
	\begin{equation}
		r_k(m)\delta=l_\alpha\sqrt{\frac{(m)_k r_k(m)d_k}{ n \rho(x)}}=l_\alpha d_k \sqrt{\frac{m}{n \rho(x){2(k-1)\choose k-1}}}\leq 1,\label{thcon22}
	\end{equation}
	as follows from assumption (\ref{thebou1}).  By (\ref{thcon1}), (\ref{thcon2}) and (\ref{thcon21}), the length of the confidence interval in (\ref{thdere}) is bounded above by
	\begin{equation}
		\delta \rho(x)+\delta\left| L_{m,k}(\rho, x)-\rho(x)\right|+\left|\frac{(m)_k}{m^k}B_{m,k}(F^{(k)};x)-F^{(k)}(x)\right|\leq \left(2l_\alpha+\frac{1}{l_\alpha} \right) \sqrt{\frac{\rho(x)}{n}}s_k(m) . \label{thcon23}
	\end{equation}
	From (\ref{dftaue}) and the first equality in (\ref{thcon21}), the exponent in (\ref{thdere}) is bounded above by
	\[-\frac{n r_k(m)}{3(m)_k d_k}\delta^2 L_{m,k}(\rho, x)=-\frac{l_\alpha^2}{3\rho(x)}\left(\rho(x)+\left( L_{m,k}(\rho, x)-\rho(x)\right)\right)\leq -\frac{l_\alpha^2}{3}+\frac{1}{3}, \]
	where the last inequality follows from condition (\ref{thcon1}). Therefore, the right-hand side in (\ref{thdere}) is bounded by
	\[2e^{1/3}e^{-l_\alpha^2/3}=\alpha,\]
	as a consequence of (\ref{dflalph}).  This, together with (\ref{thcon23}) and Theorem \ref{thder}, shows the result.
\end{proof}

\begin{proof}[Proof of Lemma \ref{lebemo}] Fix $0\leq h \leq 1/2$.  Suppose that $0<\beta\leq 1$. The function $f_\beta(x)=F_\beta(x+h)-F_\beta(x),\ 0\leq x\leq 1-h$, is decreasing and therefore attains its maximum at $x=0$.  This shows the equality in (\ref{befimo}). If $\beta>1$, then
	\[\omega(F_\beta;h)\leq \|F_\beta^{(1)}\|h=\beta h.\]
	To show (\ref{besemo}), consider the function
	\[g_\beta(x)=2\left(F_\beta(x)-\frac{1}{2}\left(F_\beta(x+h)+F_\beta(x-h)\right)\right),\quad h\leq x\leq 1-h.\] 
	Assume that $0<\beta\leq 1$.  Since $F_\beta$ is concave, the function $g_\beta$ is nonnegative and satisfies 
	\[g_\beta^{(1)}(x)=2\beta\left(F_{\beta-1}(x)-\frac{1}{2}\left(F_{\beta-1}(x+h)+F_{\beta-1}(x-h)\right)\right)\leq 0,\] 
	as $F_{\beta-1}$ is convex.  Therefore, $g_\beta$ attains its maximum at $x=h$ and 
	\[g_\beta(h)=2h^\beta-(2h)^\beta= \left(2-2^\beta\right)h^\beta.\]
	If $1<\beta\leq 2$, we proceed in a similar way, by considering the nonnegative function $-g_\beta(x)$.  Finally, if $\beta >2$, then 
	\[\omega_2(F_\beta;h)\leq \|F_\beta^{(2)}\|h^2\leq \beta (\beta-1) h^2.\]
	The proof is complete.
	\end{proof}

\end{document}